\newtheorem{theorem}{Theorem}
\newtheorem{lemma}{Lemma}
\newtheorem{corollary}{Corollary}
\newtheorem{proposition}{Proposition}
\newtheorem{definition}{Definition}
\newtheorem{example}{Example}
\newtheorem{remark}{Remark}
\newtheorem{algorithm}{Algorithm}
\newcommand{\D}{\mathcal{D}}
\newcommand{\F}{\mathcal{F}}
\newcommand{\Z}{\mathbb{Z}}
\newcommand{\N}{\mathbb{N}}
\newcommand{\R}{\mathbb{R}}
\newcommand{\Q}{\mathbb{Q}}
\newcommand{\C}{\mathbb{C}}
\newcommand{\ab}{\tfrac{a}{b}}
\journal{Discrete Mathematics}
\begin{document}

\begin{frontmatter}



\title{Digit expansions in rational and algebraic basis} 


\author{Lucía Rossi\footnote{This author was supported by the Austrian Science Fund, project ESP8098724.}} 

\affiliation{organization={Technical University of Vienna},
            addressline={Wiedner Hauptstr. 8–10}, 
            city={Vienna},
            postcode={1040}, 
            country={Austria}}

\begin{abstract}
Consider $\alpha \in \Q(i)$ satisfying $|\alpha| >1$. Let $\D = \{0,1,\ldots,|a_0|-1\}$, where $a_0$ is the independent coefficient of the minimal primitive polynomial of $\alpha$. We introduce a way of expanding complex numbers in base $\alpha$ with digits in $\D$ that we call $\alpha$-expansions, which generalize rational base number systems introduced in~\citep{MR2448050} and are related to rational self-affine tiles introduced in~\citep{MR3391902}. We define an algorithm to obtain the expansions for certain Gaussian integers and show results on the language. We then extend the expansions to all $x \in \C$ (or $x \in \R$ when $\alpha = \ab \in \Q$, the rational case will be our starting point) and show that they are unique almost everywhere. We relate them to tilings of the complex plane. We characterize $\alpha$-expansions in terms of $p$-adic completions of $\Q(i)$ with respect to Gaussian primes.
\end{abstract}



\begin{keyword}


Number systems \sep Digit systems \sep Complex bases \sep Tilings.
\end{keyword}

\end{frontmatter}








\section{Introduction.}\label{section:introduction}

The study of rational and algebraic digit systems can be motivated by the following question: can we define something analogous to the decimal system where the base is a rational or an algebraic number? A desirable property of a number system is that almost all numbers (in a measure-theoretic sense) can be expanded uniquely. 

In~\citep{Kempner} and~\citep{Renyi:57}, expansions with respect to non-integral real bases are considered. A straightforward way to expand a real number in a non-integer base is through the greedy algorithm, as is the case for $\beta$-expansion introduced in~\citep{Parry60} (see also \citep{MR2024754}). One famous complex numeration system has base $-1+i$ and digits $\{0,1\}$, introduced in~\citep{P65}, related to the Twin Dragon introduced in~\citep{Knu}. The golden ratio is another famous base that gives rise to binary expansions avoiding the digit sequence $11$. The examples above are all algebraic integers, that is, roots of monic integer polynomials. We want to consider algebraic bases that are not algebraic integers.

Akiyama, Frougny and Sakarovitch~\citep{MR2448050} studied representations of numbers in a rational base $\tfrac{a}{b}$ where $a > b \geq 2$ are coprime integers, using the digits $\{0,1,\ldots,a-1\}$. These representations are obtained through the so-called\textit{ modified division algorithm}, which produces less significant digits first. In particular, they obtain a unique expansion for non-negative integers. Morgenbesser, Steiner, and Thuswaldner showed patterns in these number systems in~\citep{MST:13} related to uniform distribution of digits. In the first section of this paper, we will recall rational base number systems, relate them to $ p$-adic numbers, and use them as a starting point to define algebraic number systems.

Digit systems are very often linked to self-affine sets. \textit{Rational self-affine tiles} were introduced in~\citep{MR3391902} for algebraic bases by defining a certain non-Euclidean representation space in terms of $\mathfrak{p}$-adic completions of algebraic number fields with respect to prime ideals. We revisit some results of this paper (which is of great depth and complexity) and reframe some of their proofs in our setting. The article \citep{RT:22} serves as an introduction to the topic, as it centers around the particular example of the rational number system with base $-\frac32$ and digits $\D = \{0,1,2\}$ using the representation space $\R \times \Q_2$; Donald Knuth called the elements of this space \textit{ambinumbers} in~\citep{Knuth22}, where he continued the study of the $-\frac32$ example. In~\citep{RST:23}, the authors generalize rational self-affine tiles in two directions: they consider basis that are expanding matrices with rational entries, and varying digit sets; the $p$-adic completions are replaced by projective limits.

The central idea of this paper is to consider digit expansions of complex numbers in base $\alpha \in \Q(i) \setminus \R$. We assume that $|\alpha| > 1$. The digit set $\D$ is of the form $\{0,1,\ldots,|a_0|-1\}$, where $a_0$ is the independent coefficient of the minimal primitive polynomial $P_\alpha$ of $\alpha$. We introduce a lattice $\Lambda_\alpha$ in the complex plane and the so-called integer $\alpha$-expansions, which we will afterwards extend to the whole complex plane and relate to tilings and $p$-adic completions to show uniqueness almost everywhere, as well as results regarding the language of the expansions.

 \subsection{Outline and main results}

\indent In Section \ref{Rational base number systems}, we revisit rational number systems introduced in~\citep{MR2448050}, we extend their algorithm to negative bases and characterize expansions in terms of $ p$-adic completions.

In Section~\ref{Algebraic number systems}, we introduce integer $\alpha$-expansions in base $\alpha \in \Q(i) \setminus \R$ for points in the lattice $\Lambda_\alpha := \Z[\alpha] \cap \alpha^{-1}\Z[\alpha^{-1}]$ through the backward division map $T_\alpha : \Lambda_\alpha \rightarrow \Lambda_\alpha, N \mapsto \frac{N-d}{\alpha}$ where $d \in \D$ is uniquely determined so that $\frac{N-d}{\alpha} \in \Lambda_\alpha$. 

The first issue we encounter is that this algorithm may not terminate, producing an expansion that is infinite to the left. In Section~\ref{Finiteness property and shift radix systems}, we show that this question is analogous to determining the finiteness property of certain shift radix systems, which is well studied, and provide an algorithm to determine this property for a given base~$\alpha$. 

An interesting aspect of algebraic number systems is the special shape of the expansions. In section~\ref{The language of alpha expansions}, we depict them as paths on trees and show results on the language $L_\alpha$ of these expansions. 

In Theorem~\ref{theorem:residueclasses} we show the following: let $L^k_\alpha$ be the language of all integer $\alpha$-expansions of length $k$. Then each word in $L^k_\alpha$ can be extended to the right to a word in $L^{k+1}_\alpha$ by all digits $d \in \D$ that belong to exactly one residue class modulo $a_2$, where $a_2$ is the leading coefficient of $P_\alpha$. Consequently, $\#L_\alpha^k \leq \left\lceil \frac{|\D|}{a_2} \right\rceil ^k$. Afterwards, we show that the length of the integer $\alpha$-expansion of $N$ behaves like $\log_{|\alpha|}(|N|)+ \mathcal{O}(1)$. We give algorithms for the addition and multiplication of integer $\alpha$-expansions.

The next natural step is to extend the expansions to all complex numbers; we call them $\alpha$-expansions. Their language corresponds to words in $D^\omega$ whose prefixes are in $L_\alpha$. We introduce them in Section~\ref{Expansion of complex numbers}, and show how to approximate the expansion of $x \in \C$ to any desired precision.

In Section~\ref{Tilings for alpha expansions}, we represent $\alpha$-expansions through a family of tiles with fractal boundaries and show in Proposition~\ref{proposition:tiling} that they yield a tiling of $\C$ (this is a consequence of \cite[Theorem 3]{MR3391902}). This will be used to show our main result, stated in Theorem~\ref{theorem:uniqueae}: that $\alpha$-expansions of complex numbers are unique almost everywhere with respect to the Lebesgue measure.

In Section \ref{p adic completions}, we consider $p$-adic completions of the field $\Q(i)$ where $p$ is a Gaussian prime. In Theorem~\ref{padiccompletions} we show the following: write $\alpha = \frac{num(\alpha)}{den(\alpha)}$ where $num(\alpha), den(\alpha) \in \Z[i]$ have no common Gaussian prime divisors. Then, an expansion
        \begin{equation}
             x = \sum_{j \leq k} d_j \alpha^j \qquad (d_j\in\mathcal{D})
        \end{equation}
    is an $\alpha$-expansion of $x$ if and only if it converges to $0$ in $K_p$ for every Gaussian prime $p$ dividing $den(\alpha)$.
 We consider pairs in a space that we denote $\C \times K_{den(\alpha)}$, and define $\alpha$-expansions of points in this space. We obtain uniqueness almost everywhere.

At the end of the paper, we state some generalizations and pose some open questions. We mention that the restriction that $\alpha \in \Q(i)$ is mostly done for simplicity and that one could consider a quadratic algebraic number instead; in that case, $\Q(\alpha)$ may not be a unique factorization domain, so the completions would be defined in terms of prime ideals.

\section{Rational base number systems}\label{Rational base number systems}
     
 Let $\ab \in\Q$ with $a$ and $b$ coprime integers and $a > b \geq 2$. Consider the digit set $\D = \{0, \ldots, a-1\}$. Akiyama, Frougny and Sakarovitch~\citep{MR2448050} introduced the following modified division algorithm. 
 
 Write $N= N_0$ and define $N_{j} \in \Z$ by
\begin{equation}\label{eq:algo}
bN_j = aN_{j+1} + d_j,
\end{equation}
where $d_j\in \mathcal{D}$ is the unique digit satisfying $d_j\equiv bN_j \mod{a}$. Induction yields 
\begin{equation}\label{eq:NNi}
N=\left( \ab \right)^{j+1}N_{j+1}+\tfrac{d_{j}}{b}\left( \ab \right)^{j}+\cdots+\tfrac{d_1}{b}\left( \ab \right)+ \tfrac{d_0}{b}.
\end{equation}
Since $N_{j+1} < N_j$, eventually $N_{k+1} = 0$ for some minimal $k$. Then
 \begin{equation}
    N = \frac{1}{b} \left( d_k\left( \ab \right)^k + d_{k-1}\left( \ab \right)^{k-1} 
+ \cdots + d_1\left( \ab \right) + d_0 \right)
 \qquad (d_j\in\mathcal{D})
\end{equation}
with $d_k \neq 0$. 

We consider an analogous expansion for negative bases and show that the algorithm ends.
 
 \begin{proposition}
Let $\ab \in\Q$, $\ab <0$ with $-a > b > 2$, and $\D = \{0, \ldots, |a|-1\}$. Let $N = N_0 \in \Z$. Then the sequence $(N_j)_{j \geq 0}$ obtained through \eqref{eq:algo} is eventually zero.
\end{proposition}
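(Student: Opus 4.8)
The plan is to reduce the whole dynamics to a single non-increasing integer quantity, namely $|N_j|$, and then exploit the sign reversal caused by the negative base. First I would put the map in closed form. Since $d_j$ is the residue of $bN_j$ modulo $|a|$ lying in $\mathcal{D}$, we have $bN_j-d_j=|a|\lfloor bN_j/|a|\rfloor$, and because $a=-|a|$ this gives
\begin{equation*}
N_{j+1}=\frac{bN_j-d_j}{a}=-\left\lfloor \frac{bN_j}{|a|}\right\rfloor .
\end{equation*}
This single identity makes the sign behaviour transparent and drives the argument.

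Next I would analyse the magnitude case by case, using only $b<|a|$. If $N_j>0$, then $N_{j+1}=-\lfloor bN_j/|a|\rfloor\le 0$ and $|N_{j+1}|=\lfloor bN_j/|a|\rfloor\le bN_j/|a|<N_j$, so the magnitude drops by at least one. If $N_j<0$, writing $Q=-N_j>0$ and using $\lfloor -x\rfloor=-\lceil x\rceil$ yields $N_{j+1}=\lceil bQ/|a|\rceil>0$; since $bQ/|a|<Q$ and $Q$ is an integer, $\lceil bQ/|a|\rceil\le Q$, so $|N_{j+1}|\le|N_j|$ while the value turns strictly positive. Finally $N_j=0$ forces $d_j=0$ and $N_{j+1}=0$. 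Two facts emerge: the sequence $(|N_j|)_{j\ge 0}$ is non-increasing, and whenever $N_j\neq 0$ its sign is reversed (a positive term maps to a non-positive one, a negative term to a positive one).

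I would then conclude by a monotonicity-plus-alternation argument. As a non-increasing sequence of non-negative integers, $(|N_j|)$ is eventually constant, equal to some $m^\ast$, and it suffices to show $m^\ast=0$. Suppose $m^\ast>0$ and choose $J$ with $|N_j|=m^\ast$ for all $j\ge J$, so $N_j\neq 0$ there. A positive $N_j$ is impossible, since it would force $|N_{j+1}|<m^\ast$; hence $N_j<0$ for every $j\ge J$. But a negative $N_j$ produces $N_{j+1}>0$, a contradiction. Therefore $m^\ast=0$ and $(N_j)$ is eventually zero.

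The \emph{main obstacle}, and the reason the positive-base proof does not transfer verbatim, is that with a negative base the magnitude need not strictly decrease at every step: at a negative value it may stay constant (for instance $-1\mapsto 1$ when $\ab=-\tfrac43$), so the naive ``strictly decreasing non-negative integer sequence'' reasoning fails. The point that rescues the argument is the exact floor/ceiling evaluation of the map, which simultaneously delivers the non-strict monotonicity of $|N_j|$ everywhere and the forced sign reversal; the strict decrease at positive terms, which recur infinitely often unless the orbit has already reached $0$, then forbids a nonzero limiting magnitude. I would be careful that the estimate $\lceil bQ/|a|\rceil\le Q$ relies only on $b<|a|$ and on $Q\in\mathbb{Z}$, so that no lower bound on $|N_j|$ is ever required and the small values need no separate case analysis.
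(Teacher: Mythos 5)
Your proof is correct, and while it shares the paper's overall skeleton --- show that $(|N_j|)_{j\ge 0}$ is a non-increasing sequence of non-negative integers and then rule out a nonzero plateau --- it executes both steps by a genuinely different device. The paper gets monotonicity from the triangle-inequality bound $|N_{j+1}|\le |b/a|\,|N_j|+|d_j/a|<|N_j|+1$, and rules out the plateau by a case analysis on $|N_{j+1}|=|N_j|$: the fixed-point case $N_{j+1}=N_j$ forces $N_j=0$ via $(b-a)N_j=d_j$, and the case $N_{j+1}=-N_j$ is dispatched by an explicit two-step computation showing $d_{j+1}=-a-d_j$ and hence $N_{j+2}=N_j+1$ with $N_j<0$, so the magnitude strictly drops after two steps. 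You instead put the map in the closed form $N_{j+1}=-\lfloor bN_j/|a|\rfloor$, from which non-strict monotonicity of $|N_j|$, forced sign reversal at every nonzero value, and strict decrease at positive values all fall out at once; the plateau is then impossible because it would require an all-negative tail, contradicting the sign reversal. The two plateau arguments are really the same two-step phenomenon seen from different angles, but your version is cleaner: it avoids the somewhat delicate determination of $d_{j+1}$ and treats all starting values uniformly, at the cost of introducing the floor/ceiling formalism. Both arguments use only $b<|a|$ and the integrality of the $N_j$, so neither actually invokes the hypothesis $b>2$ beyond guaranteeing a genuine non-integer base.
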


\begin{proof}

For every $j \geq 0$, we have
\begin{equation}\label{eq:recurrence}
     |N_{j+1}| = \Bigg| \frac{b}{a} N_j - \frac{d_j}{a} \Bigg| \leq \left| \frac{b}{a} \right| |N_j| + \left| \frac{d_j}{a} \right|.
\end{equation}


Since $|\frac{b}{a}|<1$ and $d_j < |a|$, it follows that $ |N_{j+1}| < |N_j| + 1,$ thus $(|N_j|)_{j \geq 0}$ is bounded and non-increasing. Suppose that $|N_{j+1}| = |N_j|$ for some $j$. This implies that either $N_{j+1} = N_j$ or $N_{j+1} = -N_j$. 
If $N_{j+1} = N_j$, then 
\[
    b N_j = a N_j + d_j , \quad \mbox{so} \quad (b-a)N_j = d_j.
\]
Since $b-a = b + |a| > d_j$, we must have $N_j = d_j = 0$.

If $N_{j+1} = -N_j$, then
\[
    b N_j = -a N_j + d_j, \quad \mbox{so} \quad N_j = \frac{d_j}{a+b}.\]
Moreover, $d_j \equiv b N_j \mod{a}$. On the next step of the iteration, we have
\[
    b N_{j+1} = a N_{j+2} + d_{j+1}.
\]
Substituting $b N_{j+1} = -b N_j$, we find $d_{j+1} \equiv -b N_j \mod{a}.$ Since $d_j, d_{j+1} \in \{0, \ldots, |a|-1\}$ and $a < 0$, it follows that $d_{j+1} = -a - d_j.$ Substituting $b N_{j+1} = -b N_j = -b \frac{d_j}{a+b}$ into the recurrence, we have
\[
    -b \frac{d_j}{a+b} = a N_{j+2} - a - d_j,
\]
which simplifies to
\[
    N_{j+2} = 1 + \frac{d_j}{a+b} = 1 + N_j.
\]
Note that $N_j < 0$, so it follows that
\[
    |N_{j+2}| = |N_j + 1| < |N_j|.
\]

Thus $(N_j)_{j \geq 0}$ is eventually zero.
\end{proof}

We are going to slightly rephrase their algorithm to understand these expansions better. If we consider $N = N_0 \in b\Z $ and remove the preceding $\frac1b$ factor, then the expansions obtained are the same: define $N_j \in b\Z$ for $j\geq 0$ satisfying
\[
    N_j = \ab N_{j+1} + d_j
\]
 with $d_j \in \D$. In other words, we are considering the backward division map 
 \[T_{\ab}: b\Z \rightarrow b\Z, \quad N \mapsto \frac ba (N-d) \] 
 where $d \in \D$ is the unique digit such that $\frac ba (N-d) \in b\Z$. By restricting this map to the lattice $b\Z$ instead of the ring $\Z[\ab]$, we obtain a unique expansion\begin{equation}
    N =  d_k\left( \ab \right)^k + d_{k-1}\left( \ab \right)^{k-1} 
+ \cdots + d_1\left( \ab \right) + d_0 
 \in b\Z.
\end{equation} We called it the integer $\ab$-expansion of $N$.

The expansion can be extended to real numbers.


    \begin{definition}
    Let $\ab \in\Q$ with $a$ and $b$ coprime integers, $|a| \geq b \geq 2$, and $\D = \{0, \ldots, a-1\}$. Let $x \in \R$. If $\frac ab > 0$, assume $x>0$. An $\ab$-expansion of $x$ is an expansion of the form

    \begin{equation}
        x = d_k\left( \ab \right)^k + \cdots + d_1\left( \ab \right) + d_0 + d_{-1}\left( \ab \right)^{-1} + \cdots \qquad (d_j\in\mathcal{D})
    \end{equation}
    
    with $d_k \neq 0$ such that $(d_k \ldots d_l)_{a/b}$ corresponds to the integer $\ab$-expansion of some $N \in b\Z$ for every $l \leq k$. We denote it
    \[
        x = (d_k \ldots d_0 . d_{-1}d_{-2}\ldots)_{a/b}.
    \]
    \end{definition}

    It follows from~\cite[Theorem 36]{MR2448050} that the $\ab$ expansion of reals is unique almost everywhere for positive $\ab$, and the negative case can be deduced along similar lines.




Our next step is to characterize the $\ab$ expansions of reals in terms of $p$-adic completions, following~\citep{MR3391902}. For more on the topic of $p$-adic numbers, we refer the reader to~\citep{MR1865659}. We denote by $\nu_p$ the $p$-adic valuation on $\Q$ and by $\Q_p$ the corresponding $p$-adic field.

    \begin{theorem}
    Let $\ab \in\Q$ and $\D = \{0, \ldots, a-1\}$ as before. Let $x \in \R$. If $\frac ab > 0$, assume $x>0$. A series 
        \begin{equation}\label{rationalexpansion}
             d_k\left( \ab \right)^k  
            + \cdots + d_1\left( \ab \right) + d_0 + d_{-1}\left( \ab \right)^{-1} + \cdots  \qquad (d_j\in\mathcal{D})
        \end{equation}
    converging to $x$ in $\C$ is an $\ab$-expansion of $x$ if and only if it converges to $0$ in $\Q_p$ for every prime $p$ dividing $b$.
    \end{theorem}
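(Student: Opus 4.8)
The plan is to reduce both directions of the equivalence to a single divisibility condition on the truncations of the series. For $l \leq k$ write the partial sum $S_l := \sum_{j=l}^{k} d_j \left(\ab\right)^{j}$ and set $N_l := \left(\ab\right)^{-l} S_l = \sum_{j=l}^{k} d_j \left(\ab\right)^{j-l} \in \Q$, so that $S_l = \left(\ab\right)^{l} N_l$. Unwinding the definition of the integer $\ab$-expansion, I would first show that the word $(d_k \ldots d_l)$ is the integer $\ab$-expansion of an element of $b\Z$ exactly when $N_m \in b\Z$ for every $m$ with $l \leq m \leq k$. Indeed, $N_m = \ab\, N_{m+1} + d_m$, so once every $N_m$ lies in $b\Z$, the digit $d_m$ is forced to be the unique element of $\D$ making $\tfrac{b}{a}(N_m - d_m) = N_{m+1} \in b\Z$, which is precisely the digit produced by the backward division map; the recursion then terminates at $N_{k+1}=0$. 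Hence the series is an $\ab$-expansion of $x$ if and only if $N_l \in b\Z$ for every $l \leq k$.

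Next I would pass to the $p$-adic side. Fix a prime $p \mid b$ and put $s := \nu_p(b) \geq 1$; since $\gcd(a,b)=1$ we have $\nu_p(a)=0$, so $\nu_p(\ab) = -s$ and $|\ab|_p = p^{s} > 1$. Therefore $|\left(\ab\right)^{j}|_p = p^{sj} \to 0$ as $j \to -\infty$, the series converges in $\Q_p$ automatically, and it converges to $0$ if and only if its partial sums satisfy $S_l \to 0$ in $\Q_p$. The one identity carrying the whole argument is $|S_l|_p = p^{sl}\,|N_l|_p$.

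For the two implications I would use that $N_l$ is a $\Z$-linear combination of $\left(\ab\right)^{0},\ldots,\left(\ab\right)^{k-l}$, so its denominator divides $b^{\,k-l}$ and involves only primes dividing $b$; consequently $N_l \in b\Z$ is equivalent to the family of inequalities $\nu_p(N_l) \geq \nu_p(b)$ ranging over $p \mid b$. If the series is an $\ab$-expansion, then $N_l \in b\Z$, so $|N_l|_p \leq p^{-s}$ and $|S_l|_p = p^{sl}|N_l|_p \leq p^{s(l-1)} \to 0$, giving convergence to $0$ in $\Q_p$. Conversely, if the series converges to $0$ in $\Q_p$, I split it as $S_l = -R_l$ with $R_l := \sum_{j<l} d_j \left(\ab\right)^{j}$; the ultrametric inequality gives $|R_l|_p \leq p^{s(l-1)}$, whence $p^{sl}|N_l|_p \leq p^{s(l-1)}$, i.e. $\nu_p(N_l) \geq s = \nu_p(b)$. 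Carrying this out for every $p \mid b$ yields $N_l \in b\Z$ for all $l \leq k$, so the series is an $\ab$-expansion.

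The valuation bookkeeping is routine; the step needing the most care is the first one, the dictionary matching the combinatorial definition (truncations being genuine integer expansions produced by the backward division map) to the clean condition $N_l \in b\Z$. In particular one must verify that no intermediate truncation value vanishes prematurely — immediate for $\ab>0$ since the digits are non-negative and $d_k\neq 0$, but requiring a short sign argument for $\ab<0$ — and that the leading-digit condition $d_k\neq 0$ makes the produced word have exactly the expected length. Once this dictionary is established, the $p$-adic characterization is precisely the elementary estimate above.
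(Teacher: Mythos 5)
Your proposal is correct and takes essentially the same route as the paper: both reduce the statement to the condition $N_l = \left(\ab\right)^{-l}S_l \in b\Z$ for all $l \leq k$ and then compare the $p$-adic valuation of the partial sum $S_l$ with the ultrametric bound on the tail, your converse being merely the direct (rather than contrapositive) phrasing of the paper's argument. The only presentational difference is that you spell out the dictionary between the combinatorial definition and the condition $N_l \in b\Z$, which the paper treats as immediate from the definition of integer $\ab$-expansions.
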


    \begin{proof}
        Let $l \in \Z$ so that $l \leq k$. 
        The sum of the first $k-l$ terms of the series \eqref{rationalexpansion} is
        \[
           S_l = d_k\left( \ab \right)^{k} + \cdots + d_{l+1}\left( \ab \right)^{l+1} + d_{l}\left( \ab \right)^{l}.
        \]
         Write 
        \[
            N_l = \left( \ab \right)^{-l} S_l = d_k\left( \ab \right)^{k-l} + \cdots + d_{l+1}\left( \ab \right) + d_{l}.
        \]
        
        Assume that \eqref{rationalexpansion} is an $\ab$-expansion of $x$, then $N_l = \left( \ab \right)^{-l} S_l \in b\Z$. Let $p$ be a prime dividing $b$, then $\nu_p(S_l) \geq -l+1$. The remaining tail of the series \eqref{rationalexpansion} is given by
        \[
            d_{l-1}\left( \ab \right)^{l-1} + d_{l-2}\left( \ab \right)^{l-2} + \cdots
        \]
        Note that $\nu_p( d_t \left( \ab \right)^t ) \geq -l+1$ for every $t \leq l-1$. Denote by $L_p \in \Q_p$ to the limit of~\eqref{rationalexpansion} in $\Q(p)$, then $\nu_p(L_p) \geq -l+1$. Hence, letting $l \to -\infty$, the series~\eqref{rationalexpansion} converges to $0$ in $\Q_p$.
        
        For the converse, suppose there exists some $l$ such that $N_l \notin b\Z$. Then, there exists some prime $p$ dividing $b$ and $r \geq 1$ such that $p^r$ divides $b$ and $p^r$ does not divide $N_l$. Then $\nu_p(S_l) < r(-l+1)$. It holds that $\nu_p( d_t \left( \ab \right)^t ) \geq r(-l+1)$ for every $t \leq l-1$, so $\nu_p(L_p)  < r(-l+1)$ and therefore $L_p \neq 0$, that is, the series does not converge to $0$ in~$\Q_p$.
    \end{proof}

    In the upcoming sections, we will generalize these expansions to the complex plane, where one can visualize them through tilings.
    


    

\section{Algebraic number systems}\label{Algebraic number systems}

Consider a Gaussian rational $\alpha \in \Q(i)$ with $|\alpha| > 1$ and $\alpha \notin \R$. Then $\alpha$ is a quadratic algebraic number, and is a root of$$P_{\alpha}(X) = a_2X^2 + a_1X + a_0$$ such that $a_0, a_1, a_2 \in \Z$ are coprime, and assume $a_2 > 0$. We call $P_\alpha$ the \textbf{minimal primitive polynomial} of $\alpha$. 

Consider the set $\D = \{0,\ldots,|a_0|-1\}$. We call the pair $(\alpha, \D)$ an \textbf{algebraic number system}. The set $\D$ is a complete set of residues of $\Z[\alpha]$ modulo $\alpha$; this is known in the literature as a standard digit set.




An \textbf{expansion} of $x \in \C$ in base $\alpha$ with digits in $\D$ (also known as a representation) is an infinite sequence $(d_j)_{j \leq k}$ with $k \in \Z$ and $d_j \in \D$ such that
    \begin{equation}\label{representationofx}
        x = \sum_{j \leq k} d_j \alpha^j.
    \end{equation}
    When $d_j=0$ for all $j<0$, the expansion is said to be an \textbf{integer expansion}. When $d_j=0$ for all $j \geq 0$, it is said to be a \textbf{fractional expansion}. 

    Given a number system $(\alpha, \D)$ and a sequence $(d_j)_{j \leq k}$, we define the \textbf{evaluation map} $\pi_\alpha$ as
    \[
        \pi_\alpha ((d_j)_{j \leq k}) = \sum_{j \leq k} d_j \alpha^j.
    \]


 We have seen that rational number systems arise by restricting the backward division map to a lattice. We will generalize this idea after giving some definitions and results.

\begin{definition}
Let $$\Lambda_\alpha := \Z[\alpha] \cap \alpha^{-1} \Z[\alpha^{-1}],$$
with the structure of a $\Z$-module of $\Q(i)$.
\end{definition}

We now give a $\Z$-basis for $\Lambda_\alpha.$

\begin{lemma}
    Let $P_{\alpha}(X) = a_2X^2 + a_1X + a_0$ be the minimal primitive polynomial of $\alpha$. The $\Z$-module $\Lambda_\alpha$ is a lattice in $\C$ that can be expressed as
    \[
    \Lambda_\alpha = a_2 \Z + (a_2 \alpha + a_1) \Z.
    \]
 
\end{lemma}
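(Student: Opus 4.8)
The plan is to prove the two inclusions separately, with the containment $a_2\Z + (a_2\alpha+a_1)\Z \subseteq \Lambda_\alpha$ being routine and the reverse being the crux. First I would record the identity $a_2\alpha + a_1 = -a_0\alpha^{-1}$, obtained by dividing $a_2\alpha^2 + a_1\alpha + a_0 = 0$ by $\alpha$. This makes the easy inclusion transparent: $a_2\in\Z[\alpha]$ trivially, while $a_2 = \alpha^{-1}(a_2\alpha) = \alpha^{-1}(-a_1 - a_0\alpha^{-1})\in\alpha^{-1}\Z[\alpha^{-1}]$; likewise $a_2\alpha+a_1\in\Z[\alpha]$ trivially and $a_2\alpha+a_1 = -a_0\alpha^{-1}\in\alpha^{-1}\Z[\alpha^{-1}]$. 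Since $\Lambda_\alpha$ is a $\Z$-module containing both generators, the inclusion follows. Writing $L := a_2\Z + (a_2\alpha+a_1)\Z$, its generators $a_2$ (real) and $a_2\alpha+a_1$ (non-real, as $\alpha\notin\R$) are $\R$-linearly independent, so $L$ is a rank-$2$ discrete subgroup, i.e. a lattice; hence once equality is shown, $\Lambda_\alpha$ is a lattice.

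For the reverse inclusion I would argue locally at each rational prime $p$. Since localization is exact it commutes with the finite intersection, so $(\Lambda_\alpha)_{(p)} = \Z_{(p)}[\alpha]\cap\alpha^{-1}\Z_{(p)}[\alpha^{-1}]$, and it suffices to prove $(\Lambda_\alpha)_{(p)} = L_{(p)}$ for every $p$; the global equality then follows because $L\subseteq\Lambda_\alpha$ and a module is trivial iff all its localizations are. I would split into cases according to how $p$ meets $a_0$ and $a_2$. If $p\nmid a_0a_2$, both $\alpha$ and $\alpha^{-1}$ are integral over $\Z_{(p)}$ (their minimal polynomials become monic after dividing by the leading coefficient, a $p$-unit), so $\Z_{(p)}[\alpha] = \Z_{(p)} + \alpha\Z_{(p)}$ with $\alpha$ a unit, and a short computation gives $(\Lambda_\alpha)_{(p)} = \Z_{(p)} + \alpha\Z_{(p)} = L_{(p)}$. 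If $p\mid a_2$ but $p\nmid a_0$ (or symmetrically), then $\alpha^{-1}$ is integral over $\Z_{(p)}$, so $\alpha^{-1}\Z_{(p)}[\alpha^{-1}] = \alpha^{-1}\Z_{(p)} + \alpha^{-2}\Z_{(p)}$; using $\alpha^{-1}\Z_{(p)} = (a_2\alpha+a_1)\Z_{(p)}$ and reducing $\alpha^{-2}$ modulo $(a_2\alpha+a_1)\Z_{(p)}$ to $-a_2/a_0$, this module equals $L_{(p)}$, and the squeeze $L_{(p)}\subseteq(\Lambda_\alpha)_{(p)}\subseteq\alpha^{-1}\Z_{(p)}[\alpha^{-1}] = L_{(p)}$ gives equality.

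The main obstacle is the remaining case $p\mid\gcd(a_0,a_2)$, where neither generator is integral and the naive description of $\Z_{(p)}[\alpha]$ fails. Here I would first observe that the hypothesis $\alpha\in\Q(i)$ forces $4a_0a_2 - a_1^2$ to be a perfect square, from which one deduces (since $p\nmid a_1$) that $-1$ is a quadratic residue mod $p$; thus $p\equiv 1 \bmod 4$ and splits in $\Z[i]$ as $\mathfrak{p}\bar{\mathfrak{p}}$. The element $\theta = a_2\alpha$ is an algebraic integer, and $\Z_{(p)}[\theta]$ has discriminant $a_1^2 - 4a_0a_2$, a $p$-unit, so it is the maximal order $\Z[i]_{(p)}$; comparing the trace $\theta+\bar\theta = -a_1$ (a $p$-unit) with the norm $\theta\bar\theta = a_0a_2$ shows that $\theta$ has valuation $0$ at one of $\mathfrak{p},\bar{\mathfrak{p}}$ and $\nu_p(a_0)+\nu_p(a_2)$ at the other, so that $\alpha = \theta/a_2$ has a pole at exactly one prime and a zero at its conjugate. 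A bookkeeping of valuations then yields $\Z_{(p)}[\alpha] = \{\,y : \nu_{\bar{\mathfrak{p}}}(y)\ge 0\,\}$ and $\alpha^{-1}\Z_{(p)}[\alpha^{-1}] = \{\,y : \nu_{\mathfrak{p}}(y)\ge \nu_p(a_2)\,\}$, whose intersection is the ideal $\mathfrak{p}^{\,\nu_p(a_2)}$.

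Finally I would identify $L_{(p)} = \mathfrak{p}^{\,\nu_p(a_2)}$ by checking $L_{(p)}\subseteq\mathfrak{p}^{\,\nu_p(a_2)}$ directly and that both have index $p^{\nu_p(a_2)}$ in $\Z[i]_{(p)}$: the index of $L_{(p)}$ comes from the determinant $\pm a_2$ of its generators $a_2$ and $\theta + a_1$ expressed in the basis $\{1,\theta\}$, while $N(\mathfrak{p}^{\,\nu_p(a_2)}) = p^{\nu_p(a_2)}$. Collecting the three cases gives $(\Lambda_\alpha)_{(p)} = L_{(p)}$ for all $p$, hence $\Lambda_\alpha = L$, which is the asserted lattice. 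I expect the valuation-theoretic analysis at the primes dividing $\gcd(a_0,a_2)$ to be the delicate point, precisely because it is exactly there that $\Z[\alpha]$ and $\alpha^{-1}\Z[\alpha^{-1}]$ fail to be finitely generated and one must use the splitting of $p$ in $\Z[i]$ to see why their intersection nonetheless collapses to a lattice.
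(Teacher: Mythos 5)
Your proposal is essentially correct, but it is a genuinely different contribution from what the paper does: the paper gives no proof at all and simply defers to \cite[Lemma~6.14]{MR3391902}, whereas you supply a self-contained local–global argument. Your easy inclusion and the lattice claim are fine, and the localization strategy (reduce to $(\Lambda_\alpha)_{(p)} = L_{(p)}$ for each rational prime $p$, split according to whether $p$ divides $a_0$, $a_2$, or both) is sound; cases $p\nmid a_0a_2$ and $p\mid a_2$, $p\nmid a_0$ (and its mirror) check out exactly as you sketch. Two points in the case $p\mid\gcd(a_0,a_2)$ need more care. First, your deduction ``$-1$ is a quadratic residue mod $p$, hence $p\equiv 1\bmod 4$'' silently assumes $p$ odd; you should rule out $p=2$ separately (if $2\mid a_0$ and $2\mid a_2$ then $a_1$ is odd and $m^2=4a_0a_2-a_1^2\equiv -1\bmod 8$, which is impossible). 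Second, the ``bookkeeping of valuations'' giving $\Z_{(p)}[\alpha]=\{y:\nu_{\bar{\mathfrak{p}}}(y)\ge 0\}$ is the one step that is not routine: you must specify the ambient ring (elements of $\Q(i)$ integral at every prime of $\Z[i]$ not lying over $p$), and the inclusion $\supseteq$ requires producing elements with strictly negative $\mathfrak{p}$-valuation but zero $\bar{\mathfrak{p}}$-valuation, which forces a B\'ezout/CRT argument in the semilocal PID $\Z[i]\otimes\Z_{(p)}$ using that $\mathfrak{p}$ and $\bar{\mathfrak{p}}$ are coprime; once that is written out the rest (the intersection being $\mathfrak{p}^{\nu_p(a_2)}$ and the index count $[\Z[i]_{(p)}:L_{(p)}]=|a_2|_{p}^{-1}$ via the determinant) closes the argument. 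What your route buys is independence from the machinery of \citep{MR3391902}; what the citation buys is that the reference proves the statement for arbitrary degree, where the Brunotte basis has $n$ generators and the case analysis over primes dividing several coefficients is handled uniformly with $\mathfrak{p}$-adic valuations of $\Q(\alpha)$.
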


\begin{proof}
See \cite[Lemma 6.14]{MR3391902}.
\end{proof}
   The set $$ \mathcal{B}_\alpha = \{a_2, a_2\alpha + a_1\}$$ is known as the \textbf{Brunotte basis} of $\Lambda_\alpha$. This lattice will be key in the definition of our algorithm.

 \begin{lemma}\label{standarddigit}
    $\D = \{0,1,\ldots, |a_0|-1\} $ is a complete set of residues of $\Lambda_\alpha / \alpha \Lambda_\alpha.$ Consequently, the {backward division map} $T_\alpha$ in $\Lambda_\alpha$ given by
        \begin{equation}\label{backwarddivision}
            T_{\alpha} : \Lambda_\alpha \rightarrow \Lambda_\alpha, \qquad x \mapsto \frac{x-d}{\alpha},
        \end{equation}
    is well defined, where $d$ is the unique element of $\D$ such that $T_{\alpha}(x) \in \Lambda_\alpha$.
 \end{lemma}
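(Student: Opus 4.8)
The plan is to reduce the entire statement to one clean assertion: for every $x \in \Lambda_\alpha$ there is exactly one digit $d \in \D$ with $x - d \in \alpha\Lambda_\alpha$, equivalently $\tfrac{x-d}{\alpha} \in \Lambda_\alpha$. This is precisely what ``$\D$ is a complete set of residues of $\Lambda_\alpha/\alpha\Lambda_\alpha$'' should mean, and it immediately delivers well-definedness of $T_\alpha$, since $T_\alpha(x)=\tfrac{x-d}{\alpha}$ lands in $\Lambda_\alpha$ for that and only that $d$. As a first step I would compute $\alpha\Lambda_\alpha$ explicitly from the Brunotte basis $\{a_2, a_2\alpha + a_1\}$: using $\alpha\cdot a_2 = a_2\alpha$ together with the minimal polynomial relation $\alpha(a_2\alpha + a_1) = a_2\alpha^2 + a_1\alpha = -a_0$, one obtains $\alpha\Lambda_\alpha = a_2\alpha\,\Z + a_0\,\Z$.

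For uniqueness I would argue as follows. If $d, d' \in \D$ both satisfy the congruence, then their difference $d - d' \in \alpha\Lambda_\alpha$ is a rational integer. The rational integers lying in $\alpha\Lambda_\alpha = a_2\alpha\,\Z + a_0\,\Z$ are exactly $a_0\Z$: in a combination $k\,a_2\alpha + l\,a_0$ the term $k\,a_2\alpha$ is real only when $k=0$, because $\alpha \notin \R$. Hence $d - d' \in a_0\Z$, and since $|d - d'| \leq |a_0| - 1 < |a_0|$ we conclude $d = d'$.

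For existence I would pass to Brunotte coordinates. Writing $x = m\,a_2 + n(a_2\alpha + a_1) = (m\,a_2 + n\,a_1) + n\,a_2\alpha$ with $m,n \in \Z$, the condition $x - d \in a_2\alpha\,\Z + a_0\,\Z$ splits into two parts: the $a_2\alpha$-component matches automatically (with coefficient $n$), while the rational part requires $m\,a_2 + n\,a_1 - d \in a_0\Z$, that is, $d \equiv m\,a_2 + n\,a_1 \pmod{a_0}$. Since $\D = \{0,\ldots,|a_0|-1\}$ is a complete residue system modulo $|a_0|$, there is exactly one such $d$; this proves existence and re-proves uniqueness. Combining the two parts shows that $\D$ is in bijection with the residue classes and that $T_\alpha$ is well defined.

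The step needing the most care, and the main obstacle to a naive reading, is the meaning of the quotient $\Lambda_\alpha/\alpha\Lambda_\alpha$ itself: one checks, already for $P_\alpha = 2X^2 + X + 3$, that $\alpha\Lambda_\alpha \not\subseteq \Lambda_\alpha$, so this is not literally a quotient of a group by a subgroup. I would therefore phrase the whole argument through the congruence $x - d \in \alpha\Lambda_\alpha$ inside $\C$ (equivalently via $\Lambda_\alpha/(\Lambda_\alpha \cap \alpha\Lambda_\alpha)$), which is exactly what well-definedness of $T_\alpha$ requires and avoids the issue entirely. I would also flag that the coprimality of $a_0,a_1,a_2$ (primitivity of $P_\alpha$), which already underlies the Brunotte basis of the previous lemma, is what guarantees that the number of residue classes is exactly $|a_0| = |\D|$; without it the image of the rational-part map $(m,n)\mapsto m\,a_2 + n\,a_1 \bmod a_0$ would be a proper subgroup of $\Z/a_0\Z$ and the counts would fail to match.
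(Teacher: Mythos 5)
Your proof is correct and follows essentially the same route as the paper's: both pass to Brunotte coordinates $x = u\,a_2 + v(a_2\alpha+a_1)$ and reduce the condition $\frac{x-d}{\alpha}\in\Lambda_\alpha$ to the congruence $d \equiv u\,a_2 + v\,a_1 \pmod{a_0}$, solved uniquely in $\D$ by division with remainder. You additionally make explicit the uniqueness of $d$ (via $\alpha\Lambda_\alpha\cap\Z = a_0\Z$) and the caveat that $\alpha\Lambda_\alpha\not\subseteq\Lambda_\alpha$, so the ``quotient'' must be read as the congruence $x-d\in\alpha\Lambda_\alpha$; both points are left implicit in the paper and are worthwhile clarifications rather than deviations.
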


\begin{proof}
    Let $x \in \Lambda_\alpha$ be arbitrary. Then 
    \[
        x = u\,a_2 + v(a_2 \alpha+a_1)
    \]
   for unique $u,v \in \Z$. There are unique $w \in \Z$ and $d \in \D$ such that $u \, a_2 + v \, a_1 = w\,a_0 + d$. This yields
    \begin{equation*}
        \begin{split}
            x & = v\,a_2 \, \alpha + w\,a_0 + d \\
              & = v\,a_2 \, \alpha + w \, (-a_2 \, \alpha^2 - a_1 \, \alpha) + d \\
              & = \alpha \, (v \, a_2 -w \, (a_2 \, \alpha + a_1)) + d
        \end{split}
    \end{equation*}
    so $T_\alpha(x) = v \, a_2 -w (a_2 \, \alpha + a_1) \in \Lambda_\alpha$.
\end{proof}
 

The following algorithm will yield the central definition of this paper.

\begin{algorithm}

Consider an algebraic number system $(\alpha, \D)$. Let $N = N_0 \in \Lambda_\alpha $ and define $N_{j} \in \Lambda_\alpha$ for $j \geqslant0$ by
\begin{equation}\label{eq:algoalpha}
N_j = \alpha N_{j+1} + d_j
\end{equation}
with $d_j \in \mathcal{D}$ uniquely defined by Lemma \ref{standarddigit} because $N_{j+1} = T_\alpha(N_j)$.
\end{algorithm}
Iteration of \eqref{eq:algoalpha} yields

\begin{equation}
    N = \alpha^{j+1} N_{j+1} + d_{j} \alpha^{j} + \cdots + d_1\alpha + d_0.
\end{equation}


\begin{definition}
    Suppose that the sequence $(N_j)_{j\geq0}$ defined in \eqref{eq:algoalpha} satisfies $N_{k+1}=0$ for some minimal $k$. Then
\begin{equation}
    N =  d_k \alpha ^k + d_{k-1} \alpha ^{k-1} 
+ \cdots + d_1 \alpha  + d_0 
 \qquad (d_j\in\mathcal{D}),
\end{equation}
with $d_k \neq 0$. We call this an \textbf{integer $\alpha$-expansion} of $N$, denoted 
$$N = (d_k\ldots d_0)_\alpha.$$
\end{definition}
We note
    $$\langle N \rangle_\alpha = d_k \ldots d_1 d_0.$$ The expansion of zero is the empty word $\varepsilon$. We proceed to show a couple of examples.
    
\begin{example}\label{example1}
     Let $\alpha = \frac{-1+3i}{2}$, root of $P_\alpha(X) = 2X^2 + 2X + 5$, and $\D = \{0,1,2,3,4\}$. 
The Brunotte basis for $\Lambda_\alpha$ is $ \mathcal{B}_\alpha = \{ 2,1+ 3i\}$.

Let $N = 1+3i \in \Lambda_\alpha$. Then
\[1+3i = \frac{-1+3i}{2}\,2 + 2 \]
so $N_1 = 2\in \Lambda_\alpha$ and $d_0 = 2$. The next iteration is
\[
    2 = \frac{-1+3i}{2}\,0 + 2
\]
so $d_1 = 2$ and $N_2 = 0$, and since we reached $0$ the algorithm stops. This yields the expansion $1+3i = (22)_\alpha$. More examples are shown in Table~\ref{example1table}.
 \end{example}

\begin{table}
    \centering
    \begin{tabular}{cc}
    \toprule
    $N$ & $\langle N \rangle_\alpha$ \\
    \cmidrule(lr){1-2}
    $-2 -6i$ & $2431$ \\
    $-3 -3i$ & $201$ \\
    $-4$ & $221$ \\
    $-5 +3i$ & $241$ \\
    $-6 +6i$ & $223011$ \\
    $-6i$ & $2433$ \\
    $-1 -3i$ & $203$ \\
    $-2$ & $223$ \\
    $-3 +3i$ & $243$ \\
    $-4 +6i$ & $223013$ \\
    $2 -6i$ & $2210$ \\
    $1 -3i$ & $2230$ \\
    \bottomrule
    \end{tabular}
    $\;\;\;$
    \begin{tabular}{cc}
    \toprule
    $N$ & $\langle N \rangle_\alpha$ \\
    \cmidrule(lr){1-2}
    $-1 +3i$ & $20$ \\
    $-2 +6i$ & $40$ \\
    $4 -6i$ & $2212$ \\
    $3 -3i$ & $2232$ \\
    $2$ & $2$ \\
    $1 +3i$ & $22$ \\
    $6i$ & $42$ \\
    $6 -6i$ & $2214$ \\
    $5 -3i$ & $2234$ \\
    $4$ & $4$ \\
    $3 +3i$ & $24$ \\
    $2 +6i$ & $44$ \\
    \bottomrule
    \end{tabular}
    
    \,\,
    \caption{Integer $\alpha$-expansion for $\alpha = \frac{-1+3i}{2}$ and $N =2\lambda + (1+3i)\mu \in \Lambda_\alpha$ for $\lambda, \mu \in \{ -2, \ldots, 2\}.$}
    \label{example1table}
\end{table}

\begin{example}
Let $\alpha = \frac{3+2i}{3}$, root of $P_\alpha(X) = 9x^2 - 2x + 13$, and $\D = \{ 0, \ldots, 12\}$. The lattice $\Lambda_\alpha$ is spanned by $\mathcal{B}_\alpha =\{9, -9+6i \}$. 

Let $6i \in \Lambda_\alpha$. We have 
\[
    6i = \frac{3+2i}{3} \, 6i +4,
\]
and therefore $N_j = 6i$ and $d_j = 4$ for all $j \geq 0$, and we do not obtain an integer $\alpha$-expansion (instead, we would get infinitely many $4$'s to the left). 
\end{example}

In the next section, we will show how to determine whether the algorithm always stops for a given base.

\section{Finiteness property and shift radix systems}\label{Finiteness property and shift radix systems}

Consider an algebraic number system $(\alpha, \D)$. We are only interested in bases $\alpha$ where there exists an integer $\alpha$-expansion of $N$ for every $N \in \Lambda_\alpha,$ and we want to determine when this happens. We now tackle this issue and relate it to a well-studied problem.

\begin{definition}
    We say that $\alpha$ has the \textbf{finiteness property} in $\Lambda_\alpha$ if, for every $N \in  \Lambda_\alpha$, the sequence $(N_j)_{j \geq 0}$ defined by \eqref{eq:algoalpha} satisfies $N_j = 0$ for some $j \geq 0$.
\end{definition}

The previous example shows that $\alpha = \frac{3+2i}{3}$ does not have the finiteness property in~$\Lambda_\alpha$.

\begin{proposition}
    Suppose that $\alpha$ has the finiteness property. Then every $N \in \Lambda_\alpha$ has a unique integer $\alpha$-expansion.
\end{proposition}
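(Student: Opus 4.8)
The plan is to separate the statement into an existence part and a uniqueness part, with all the work residing in uniqueness. Existence is immediate from the hypothesis: since $\alpha$ has the finiteness property, for an arbitrary $N \in \Lambda_\alpha$ the sequence $(N_j)_{j \geq 0}$ produced by \eqref{eq:algoalpha} satisfies $N_j = 0$ for some $j$; taking the least $k$ with $N_{k+1} = 0$ yields the integer $\alpha$-expansion $N = (d_k \ldots d_0)_\alpha$ (with $d_k \neq 0$ when $N \neq 0$, and the empty word $\varepsilon$ when $N = 0$). So the substance of the proposition is that this expansion is the only one.

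For uniqueness, I would show that the digit string is forced at every stage, the crucial input being Lemma \ref{standarddigit}: because $\D$ is a complete set of residues of $\Lambda_\alpha / \alpha \Lambda_\alpha$, each residue class contains exactly one digit of $\D$, and consequently the backward division map $T_\alpha$ is single-valued. I would make this precise by induction on the number of steps the algorithm needs to reach $0$. Suppose $N$ admits two integer $\alpha$-expansions, $N = (d_k \ldots d_0)_\alpha = (e_m \ldots e_0)_\alpha$. Reducing modulo $\alpha \Lambda_\alpha$ gives $d_0 \equiv N \equiv e_0$, and since $d_0, e_0 \in \D$ lie in the same class, Lemma \ref{standarddigit} forces $d_0 = e_0$. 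Dividing by $\alpha$ then exhibits $d_k \ldots d_1$ and $e_m \ldots e_1$ as two integer $\alpha$-expansions of $T_\alpha(N) = (N - d_0)/\alpha \in \Lambda_\alpha$, which reaches $0$ in one fewer step. The induction hypothesis yields $k = m$ and $d_j = e_j$ for all $j$, and the base case $N = 0$ is the empty word. Equivalently, and perhaps more transparently, one simply observes that \eqref{eq:algoalpha} is a deterministic procedure --- at each step both $d_j$ and $N_{j+1} = T_\alpha(N_j)$ are uniquely determined by $N_j$ --- so starting from $N_0 = N$ it can output only one sequence of digits.

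I expect the only mild obstacle to be the bookkeeping that justifies the residue comparison at each stage and that pins down the common length. For Lemma \ref{standarddigit} to apply repeatedly I need each intermediate tail to lie in $\Lambda_\alpha$; this is guaranteed because, by construction, every iterate $N_j$ of \eqref{eq:algoalpha} stays in $\Lambda_\alpha$, so the successive tails $N_j = \sum_{i \geq j} d_i \alpha^{i-j}$ of a genuine integer $\alpha$-expansion are precisely these iterates. The matching of lengths is then handled by the minimality of $k$ together with the requirement $d_k \neq 0$, which prevents a shorter expansion from being padded with leading zeros; the step-count induction closes this automatically. Beyond this, the argument is a direct consequence of Lemma \ref{standarddigit} and requires no further machinery.
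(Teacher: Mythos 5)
Your proof is correct and rests on the same observation as the paper's: the algorithm \eqref{eq:algoalpha} is deterministic because Lemma \ref{standarddigit} fixes the digit at every step, which gives both existence (from the finiteness hypothesis) and uniqueness. The paper states exactly this in one line; your induction on the number of steps to reach $0$ is just a more explicit writeup of the same argument.
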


\begin{proof}
    This follows directly from the definition: if $\alpha$ has the finiteness property, then an integer $\alpha$-expansion of $N$ exists for every $N \in \Lambda_\alpha$, and the digits $d_j$ are chosen uniquely through the formula \eqref{eq:algoalpha}.
\end{proof}

There is no simple characterization for the bases $\alpha$ where the finiteness property holds in $\Lambda_\alpha$. This question is analogous to determining the finiteness property of a family of dynamical systems known as shift radix systems. We will show that the backward division map $T_\alpha$ restricted to $\Lambda_\alpha$ is conjugate to an almost linear map defined in $\Z^2$.

Given the Brunotte basis $\mathcal{B}_\alpha = \{a_2, a_2\alpha + a_1\}$ of $\Lambda_\alpha$, consider the map
\begin{equation}\label{eq:iota}
    \iota_\alpha : \Q^2 \rightarrow \Q(i), \qquad (z_0, z_{1}) \mapsto \mbox{sgn}(a_0) (z_0 a_2 + z_1( a_2\alpha + a_1)).
\end{equation}
Then $z \in \Z^n$ if and only if $\iota_\alpha(z) \in \Lambda_\alpha.$ 

Given $r \in \R^n$, a \textbf{shift radix system} is a map of the form 
\[ 
    \tau_r : \Z^n \rightarrow \Z^n, \qquad (z_0, \ldots, z_{n-1}) \mapsto (z_1, \ldots, z_{n-1}, -\lfloor r \cdot z \rfloor), 
\] 
where ``$\cdot$'' denotes the scalar product in $\R^n.$ Consider the vector $r = (\frac{a_2}{a_0}, \frac{a_1}{a_0})$. The following map $\tau_r$ is a type of shift radix system (see~\citep{MR2735753} and~\citep{Kirschenhofer-Thuswaldner:14}):
\[ 
    \tau_r : \Z^2 \rightarrow \Z^2, \qquad (z_0, z_{1}) \mapsto \left(z_1,-\left\lfloor \tfrac{a_2}{a_0} z_0 + \tfrac{a_1}{a_0} z_1 
    \right\rfloor\right), 
\]

It can be seen by direct computation that, for any $(z_0, z_1) \in \Z^2$ and $r = (\frac{a_2}{a_0}, \frac{a_1}{a_0})$, 
\begin{equation}\label{eq:conjugacy}
    T_\alpha(\iota_\alpha(z_0, z_1)) = \iota_\alpha(\tau_r(z_0, z_1)).
\end{equation}

The map $\tau_r$ is said to have the finiteness property if, for every $(z_0, z_1) \in \Z^2$, there exists $k \in \N$ such that $\tau_r^k(z_0, z_1) = 0$. Since $\Z^2$ gets mapped to the lattice $\Lambda_\alpha$ via $\iota_\alpha$, $\alpha$ has the finiteness property in $\Lambda_\alpha$ if and only if $\tau_r$ has the finiteness property.

There is no easy way to describe the set of parameters $r$ such that $\tau_r$ has the finiteness property (this is a highly nontrivial problem that has been well studied), but there is a fairly simple algorithm to determine it using what is known as a set of witnesses. 

\begin{definition}
A \textbf{set of witnesses} associated with a parameter $r \in \R^n$ is a set $\mathcal{V}_r$ such that: 
\begin{enumerate}
    \item $\{ \pm e_1, \ldots, \pm e_n\} \subset \mathcal{V}_r$ (here $e_j$ is the $j$-th canonical vector) and 
    \item $z \in \mathcal{V}_r$ implies $\{ \tau_r(z), -\tau_r(-z) \} \subset \mathcal{V}_r$.
\end{enumerate}
\end{definition}

The next result can be found in~\cite[Theorem 5.1]{ABBPT:05}.

\begin{lemma}\label{lemma:witnesses}
    Given a set of witnesses $\mathcal{V}_r$ for a parameter $r$, the map $\tau_r$ has the finiteness property if and only if for every $z \in \mathcal{V}_r$ there is $k \in \N$ such that $\tau_r(z)^k = 0.$
\end{lemma}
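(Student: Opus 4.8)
The forward implication is immediate: if $\tau_r$ has the finiteness property then every element of $\Z^n$, and in particular every witness $z \in \mathcal{V}_r$, reaches $0$ under iteration. The content is the converse, which I would prove by induction on the $\ell^1$-norm $\|z\|_1 = |z_0| + \cdots + |z_{n-1}|$, using the witness set to transport the finiteness of the smallest orbits to arbitrary ones.

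The technical heart is a comparison lemma describing how $\tau_r$ acts on differences of two points. For $a,b \in \Z^n$ put $v = a-b$. Since $\tau_r$ only alters the last coordinate, through a floor, the first $n-1$ coordinates of $\tau_r(a)-\tau_r(b)$ agree with those of $v$; and writing $r\cdot b = \lfloor r\cdot b\rfloor + s$ with $s \in [0,1)$ gives $\lfloor r\cdot a\rfloor - \lfloor r\cdot b\rfloor = \lfloor r\cdot v + s\rfloor \in \{\lfloor r\cdot v\rfloor,\ \lceil r\cdot v\rceil\}$. Comparing with $\tau_r(v) = (v_1,\ldots,v_{n-1},-\lfloor r\cdot v\rfloor)$ and $-\tau_r(-v) = (v_1,\ldots,v_{n-1},-\lceil r\cdot v\rceil)$, this yields
\[
    \tau_r(a) - \tau_r(b) \in \{\tau_r(v),\ -\tau_r(-v)\}.
\]
This is exactly why the witness set must be closed under both $z \mapsto \tau_r(z)$ and $z \mapsto -\tau_r(-z)$: it guarantees that the sequence of successive differences of two orbits never leaves $\mathcal{V}_r \cup \{0\}$.

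With this in hand the induction closes. The cases $\|z\|_1 \le 1$ are covered since $0$ is fixed and each $\pm e_i \in \mathcal{V}_r$ reaches $0$ by hypothesis. For $\|z\|_1 = m \ge 2$, pick a coordinate in which $z$ is nonzero, let $v = \pm e_i \in \mathcal{V}_r$ be the corresponding signed unit vector, and set $z' = z - v$, so $\|z'\|_1 = m-1$ and, by the inductive hypothesis, $\tau_r^K(z') = 0$ for some $K$. The difference sequence $d^{(k)} := \tau_r^k(z) - \tau_r^k(z')$ starts at $d^{(0)} = v \in \mathcal{V}_r$, and by the comparison lemma each $d^{(k+1)}$ equals one of $\tau_r(d^{(k)})$ or $-\tau_r(-d^{(k)})$; hence $d^{(k)} \in \mathcal{V}_r \cup \{0\}$ for all $k$ by the witness closure property. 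In particular $w := d^{(K)} = \tau_r^K(z)$ lies in $\mathcal{V}_r \cup \{0\}$, and the hypothesis gives $\tau_r^j(w) = 0$ for some $j$, whence $\tau_r^{K+j}(z) = 0$.

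The main obstacle is the comparison lemma, and more precisely recognizing the mechanism that forces both operations $\tau_r$ and $-\tau_r(-\cdot)$ to appear in the definition of a witness set: one must see that the \emph{non-autonomous} difference sequence $(d^{(k)})$, which at each step follows whichever of the two maps the floor dictates, is trapped in $\mathcal{V}_r \cup \{0\}$, and then reinterpret it as a genuine $\tau_r$-orbit by exploiting that once $z'$ has reached $0$ the difference sequence coincides with the true orbit of $z$. Once this is isolated, the remainder is a routine induction on the norm.
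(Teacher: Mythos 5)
Your argument is correct and complete. Note first that the paper does not actually prove this lemma: it only cites \cite[Theorem 5.1]{ABBPT:05}, so there is no internal proof to compare against. What you have written is essentially the standard argument from that reference, reconstructed correctly: the comparison lemma is right (writing $r\cdot b=\lfloor r\cdot b\rfloor+s$ with $s\in[0,1)$ gives $\lfloor r\cdot a\rfloor-\lfloor r\cdot b\rfloor=\lfloor r\cdot v+s\rfloor\in\{\lfloor r\cdot v\rfloor,\lceil r\cdot v\rceil\}$, covering also the degenerate case $r\cdot v\in\Z$ where the two coincide), and it is precisely the reason the witness set is required to be closed under both $\tau_r$ and $-\tau_r(-\,\cdot\,)$. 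The induction on $\|z\|_1$ then closes as you say: the difference sequence $d^{(k)}=\tau_r^k(z)-\tau_r^k(z')$ is trapped in $\mathcal{V}_r\cup\{0\}$ (with $0$ absorbing), and once $z'$ has died it becomes the genuine orbit of $z$, which therefore terminates by the hypothesis on witnesses. One cosmetic point: the lemma as stated in the paper writes $\tau_r(z)^k=0$, which should read $\tau_r^k(z)=0$; your proof implicitly uses the correct reading.
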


In~\cite[Section 5]{ABBPT:05}, the authors give an algorithm to find a set of witnesses. We adapt it to our setting.

\begin{algorithm}\label{algorithm:witnesses}
    Let $(\alpha, \D)$ be an algebraic number system. Let $r = (\frac{a_2}{a_0}, \frac{a_1}{a_0})$ and $\tau_r$ as before. 
    \begin{enumerate}
        \item Let $\mathcal{V}_0 = \{ \pm (1,0), \pm (0,1) \}$.
        \item For $j \geq 0$, let $\mathcal{V}_{j+1} = \mathcal{V}_j \cup \tau_r(\mathcal{V}_j) \cup (-\tau_r(-\mathcal{V}_j))$.
        \item Since $\alpha$ is expanding, $r$ is contracting, so for some $j$ we reach $\mathcal{V}_{j+1} = \mathcal{V}_j = : \mathcal{V}_r$.
    \end{enumerate}
    Then $\mathcal{V}_r$ is a set of witnesses for $r = (\frac{a_2}{a_0}, \frac{a_1}{a_0})$. Consider the map $\iota_\alpha$ as before and define $\mathcal{V}_\alpha := \iota_\alpha(\mathcal{V}_r).$ We call $\mathcal{V}_\alpha$ a set of witnesses for $(\alpha, \D)$.
\end{algorithm}

The following is a direct consequence of Lemma~\ref{lemma:witnesses} and equation~\eqref{eq:conjugacy}.

\begin{proposition}
    Let $\mathcal{V}_\alpha$ be a set of witnesses for $(\alpha, \D)$ as in Algorithm~\ref{algorithm:witnesses}. Then $\alpha$ has the finiteness property in $\Lambda_\alpha$ if and only if $N$ has an integer $\alpha$-expansion for every $N \in \mathcal{V}_\alpha$.
\end{proposition}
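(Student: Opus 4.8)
The plan is to combine the conjugacy established in equation~\eqref{eq:conjugacy} with the characterization of the finiteness property via witnesses in Lemma~\ref{lemma:witnesses}. The statement to prove is an equivalence, so I would argue that each of the following three conditions is equivalent to the next: (i) $\alpha$ has the finiteness property in $\Lambda_\alpha$; (ii) $\tau_r$ has the finiteness property; (iii) $N$ has an integer $\alpha$-expansion for every $N \in \mathcal{V}_\alpha$. The equivalence of (i) and (ii) is already recorded in the text immediately after equation~\eqref{eq:conjugacy}, so the real content is linking (ii) and (iii) through Lemma~\ref{lemma:witnesses}.

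First I would transport Lemma~\ref{lemma:witnesses} across the conjugacy $\iota_\alpha$. By Algorithm~\ref{algorithm:witnesses}, the set $\mathcal{V}_r \subset \Z^2$ is a set of witnesses for $r = (\tfrac{a_2}{a_0}, \tfrac{a_1}{a_0})$, and $\mathcal{V}_\alpha = \iota_\alpha(\mathcal{V}_r)$. By Lemma~\ref{lemma:witnesses}, $\tau_r$ has the finiteness property if and only if for every $z \in \mathcal{V}_r$ there is $k \in \N$ with $\tau_r^k(z) = 0$. The key observation is that, since $\iota_\alpha$ is a bijection from $\Z^2$ onto $\Lambda_\alpha$ intertwining $\tau_r$ and $T_\alpha$ by~\eqref{eq:conjugacy}, one has $\tau_r^k(z) = 0$ if and only if $T_\alpha^k(\iota_\alpha(z)) = \iota_\alpha(\tau_r^k(z)) = \iota_\alpha(0) = 0$, using that $\iota_\alpha$ maps $0$ to $0$ and is injective. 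Thus the orbit of $z$ under $\tau_r$ reaches $0$ exactly when the orbit of $N = \iota_\alpha(z)$ under $T_\alpha$ reaches $0$.

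Next I would translate ``the $T_\alpha$-orbit of $N$ reaches $0$'' into ``$N$ has an integer $\alpha$-expansion.'' This is precisely the content of the definition of the integer $\alpha$-expansion: the sequence $(N_j)_{j \geq 0}$ generated by the backward division map $T_\alpha$ through~\eqref{eq:algoalpha} is exactly the orbit $N_j = T_\alpha^j(N)$, and $N$ has an integer $\alpha$-expansion if and only if $N_{k+1} = 0$ for some $k$, i.e.\ if and only if the orbit hits $0$. Combining this with the previous paragraph, the witness condition ``for every $z \in \mathcal{V}_r$ there is $k$ with $\tau_r^k(z)=0$'' is equivalent to ``for every $N \in \mathcal{V}_\alpha$, $N$ has an integer $\alpha$-expansion,'' since $\mathcal{V}_\alpha = \iota_\alpha(\mathcal{V}_r)$ and $\iota_\alpha$ is a bijection between these two finite sets. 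Chaining the equivalences (i)$\iff$(ii)$\iff$(iii) then gives the proposition.

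I do not expect a serious obstacle here, as the result is essentially a dictionary translation of Lemma~\ref{lemma:witnesses} through the conjugacy $\iota_\alpha$. The one point requiring care is verifying that reaching $0$ is preserved in both directions under $\iota_\alpha$ — this needs that $\iota_\alpha$ is a genuine bijection $\Z^2 \to \Lambda_\alpha$ sending $0$ to $0$, which follows from it being defined on a $\Z$-basis (up to the sign $\mathrm{sgn}(a_0)$, which is a unit and hence does not affect injectivity or the image) together with the stated property that $z \in \Z^2 \iff \iota_\alpha(z) \in \Lambda_\alpha$. A second minor point is making explicit that the finiteness of $\mathcal{V}_\alpha$, guaranteed by step~(3) of Algorithm~\ref{algorithm:witnesses}, is what makes the condition in the proposition effectively checkable, though finiteness of the set is not strictly needed for the logical equivalence itself.
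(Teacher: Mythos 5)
Your proposal is correct and follows exactly the route the paper intends: the paper states the proposition as ``a direct consequence of Lemma~\ref{lemma:witnesses} and equation~\eqref{eq:conjugacy}'' without writing out the details, and your argument is precisely the careful elaboration of that chain of equivalences (conjugacy via $\iota_\alpha$, the witness criterion, and the identification of ``orbit reaches $0$'' with ``has an integer $\alpha$-expansion''). No gaps; your attention to $\iota_\alpha$ being a bijection $\Z^2 \to \Lambda_\alpha$ fixing $0$ is exactly the point that makes the translation work.
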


We conclude this section with an example.

\begin{example}
    We continue with $\alpha = \frac{-1+3i}{2}$ and $\D = \{0,1,2,3,4\}$. Let $r =  (\frac{a_2}{a_0}, \frac{a_1}{a_0}) = (\frac25, \frac25)$. Then  $\tau_r(z_0, z_{1}) = \left(z_1,-\left\lfloor \frac25 z_0 +\frac25 z_1 
    \right\rfloor\right).$ A set of witnesses is given by $$\mathcal{V}_r = \{ (0, 1), (-1, 1), (0, 0), (-1, 0), (0, -1), (1, 0), (1, -1) \}.$$ 
    Then $$\mathcal{V}_\alpha = \{ 1+3i, -1 + 3i, 0, -2, -1-3i, 2, 1-3i \}.$$ 
    Each point has an integer $\alpha$-expansion, as was shown in Table~\ref{example1table}.
     Therefore, $\alpha$ has the finiteness property in $\Lambda_\alpha$.
\end{example}

\section{The language of integer $\alpha$-expansions}\label{The language of alpha expansions}

Let $(\alpha,\D)$ be an algebraic number system. Denote by $\D^*$ the set of all \textbf{words} or finite sequences $d_k \ldots d_0$ with $d_j \in \D$ and $k \geq 0$, together with the empty word. This set becomes a free monoid with the operation of concatenation. Denote by $\D^{\omega}$ the set of infinite words with digits in $\D$.


\begin{definition}
     Consider a number system $(\alpha, \D)$ with the finiteness property. We define the \textbf{language} $L_\alpha$ as the subset of $\D^*$ given by the set of all integer $\alpha$-expansions. For simplicity, we allow words to start with $0$. We denote by $L_\alpha^k$ the words of length $k \geq 0$. 
\end{definition}

The monoid $\D^*$ is classically represented through the infinite full $|a_0|$-ary tree. The language $L_\alpha$ is prefix closed, that is, given a word  $d_k \ldots d_0 \in L_\alpha,$ any prefix $d_k \ldots d_l, (0 \leq l \leq k)$ is also in $L_\alpha$ (this follows from the definition of integer $\alpha$-expansions). Consequently, $L_\alpha$ can be represented through a subtree $\mathcal{T}_\alpha$ of the full $|a_0|$-ary tree so that each path starting from the root corresponds to an integer $\alpha$-expansion (maybe preceded by zeroes).

More formally, for $N \in  \Lambda_\alpha$ consider
\[
    D(N) := \{ d \in \D \, : \, \alpha N + d \in  \Lambda_\alpha \}.
\]
The tree $\mathcal{T}_\alpha$ is constructed as follows: start by labeling the root $0$. The children of the root are labeled $d$ for every $d \in D(0).$ The node at the end of each child corresponds to $N = \pi_\alpha(d)$. Iteratively, each node corresponds to $\pi_\alpha(d_k \ldots d_0)$ where $d_k \ldots d_0$ is the label of the path starting from the root ending in the node, and the children of the node are labeled $D(\pi_\alpha(d_k \ldots d_0))$.

Figure~\ref{exampletree} depicts the first three levels of the tree $\mathcal{T}_\alpha$ for $\alpha = \frac{-1+3i}{2}$, with labeled edges. A first recognizable pattern is that the children of each node are either the odd or the even digits of $\D$.

\begin{figure}
\begin{tikzpicture} 
        [level distance=2cm,
   level 1/.style={sibling distance=4.6cm, level distance = 2.5cm},
   level 2/.style={sibling distance=1.5cm, level distance = 1.4cm},
   level 3/.style={sibling distance=0.6cm, level distance = 1cm}]
    \tikzstyle{filled} = [circle, fill=black, minimum size=4pt, inner sep=0pt]
    
    \node[filled] {}
        child {node[filled] {}
                child {node[filled] {} 
                        child {node[filled] {} edge from parent node[left] {\small{0}}}
                        child {node[filled] {} edge from parent node[left, near end] {\small{2}}}
                        child {node[filled] {} edge from parent node[right] {\small{4}}}
                    edge from parent node[left] {\small{0}}}
                child {node[filled] {}
                        child {node[filled] {} edge from parent node[left] {\small{0}}}
                        child {node[filled] {} edge from parent node[left, near end] {\small{2}}}
                        child {node[filled] {} edge from parent node[right] {\small{4}}}
                    edge from parent node[left, near end] {\small{2}}}
                child {node[filled] {} 
                        child {node[filled] {} edge from parent node[left] {\small{0}}}
                        child {node[filled] {} edge from parent node[left, near end] {\small{2}}}
                        child {node[filled] {} edge from parent node[right] {\small{4}}}
                    edge from parent node[right] {\small{4}}}
            edge from parent node[left] {0\,\,}}
        child {node[filled] {}             
                child {node[filled] {} 
                        child {node[filled] {} edge from parent node[left] {\small{1}}}
                        child {node[filled] {} edge from parent node[right] {\small{3}}}
                    edge from parent node[left] {\small{0}}}
                child {node[filled] {} 
                        child {node[filled] {} edge from parent node[left] {\small{1}}}
                        child {node[filled] {} edge from parent node[right] {\small{3}}}
                    edge from parent node[left, near end] {\small{2}}}
                child {node[filled] {} 
                        child {node[filled] {} edge from parent node[left] {\small{1}}}
                        child {node[filled] {} edge from parent node[right] {\small{3}}}
                    edge from parent node[right] {\small{4}}}
            edge from parent node[left] {2}}
        child {node[filled] {} 
                child {node[filled] {} 
                        child {node[filled] {} edge from parent node[left] {\small{0}}}
                        child {node[filled] {} edge from parent node[left, near end] {\small{2}}}
                        child {node[filled] {} edge from parent node[right] {\small{4}}}
                    edge from parent node[left] {\small{0}}}
                child {node[filled] {} 
                        child {node[filled] {} edge from parent node[left] {\small{0}}}
                        child {node[filled] {} edge from parent node[left, near end] {\small{2}}}
                        child {node[filled] {} edge from parent node[right] {\small{4}}}
                    edge from parent node[left, near end] {\small{2}}}
                child {node[filled] {} 
                        child {node[filled] {} edge from parent node[left] {\small{0}}}
                        child {node[filled] {} edge from parent node[left, near end] {\small{2}}}
                        child {node[filled] {} edge from parent node[right] {\small{4}}}
                    edge from parent node[right] {\small{4}}}
            edge from parent node[right] {\,\,4}};
\end{tikzpicture}
\caption{The tree $\mathcal{T}_\alpha$ for $\alpha = \frac{-1+3i}{2}$.}
\label{exampletree} 
\end{figure}
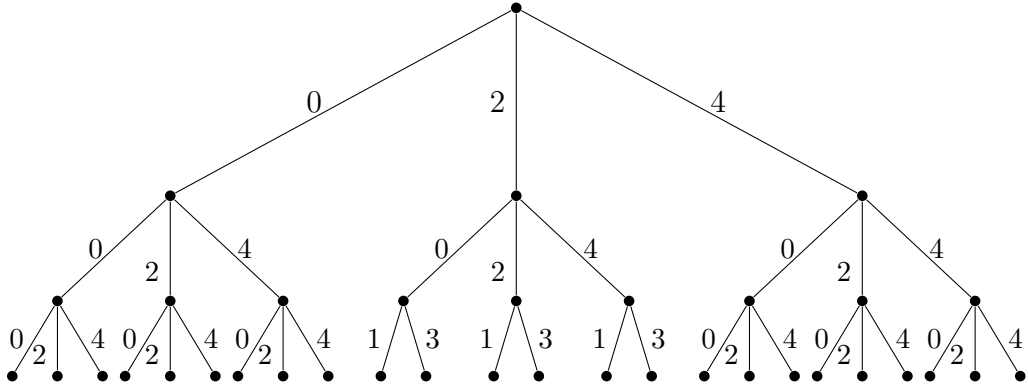




The following theorem establishes that each node will branch into exactly one residue class modulo $a_2$.

\begin{theorem}\label{theorem:residueclasses}
    A word in $L_\alpha$ can be extended to the right by all digits $d \in \D$ that belong to exactly one residue class modulo $a_2$. In other words, for every $N \in \Lambda_\alpha$ there exists some $r \in \Z$ such that
$$ D(N) = \D \cap (a_2 \Z + r).$$

\end{theorem}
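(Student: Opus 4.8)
The plan is a direct computation, entirely parallel to the proof of Lemma~\ref{standarddigit} but running the division map forward rather than backward. The guiding observation is that membership of $\alpha N + d$ in $\Lambda_\alpha$ is governed by a single congruence on $d$ modulo $a_2$, whose residue is determined by the coordinates of $N$ in the Brunotte basis; so once those coordinates are fixed, exactly one residue class survives.

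First I would fix $N \in \Lambda_\alpha$ and, using the lemma that furnishes the Brunotte basis, write $N = u\,a_2 + v(a_2\alpha + a_1)$ with unique $u,v \in \Z$. Next I would compute $\alpha N$ using the defining relation $a_2\alpha^2 + a_1\alpha + a_0 = 0$, equivalently $\alpha(a_2\alpha + a_1) = -a_0$; this collapses the $\alpha^2$ term and yields the clean expression $\alpha N = u\,a_2\,\alpha - v\,a_0$. Then, for a digit $d \in \D$, I would ask when $\alpha N + d = u\,a_2\,\alpha + (d - v\,a_0)$ lies in $\Lambda_\alpha = a_2\Z + (a_2\alpha + a_1)\Z$. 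Writing a general element of $\Lambda_\alpha$ as $(p\,a_2 + q\,a_1) + q\,a_2\,\alpha$ and matching the coefficient of $\alpha$ forces $q = u$; matching the constant part then forces $p\,a_2 = d - v\,a_0 - u\,a_1$, which is solvable over $\Z$ precisely when $d \equiv u\,a_1 + v\,a_0 \pmod{a_2}$.

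Setting $r = u\,a_1 + v\,a_0$, this gives $D(N) = \{d \in \D : d \equiv r \pmod{a_2}\} = \D \cap (a_2\Z + r)$, which is the claim. The only point requiring a word of justification — and the closest thing to an obstacle, though it is mild — is the coefficient matching: it is legitimate because $\alpha \in \Q(i)\setminus\R$ is a quadratic irrational, so $\{1,\alpha\}$ is a $\Q$-basis of $\Q(i)$ and the representation of any element in the form $s + t\alpha$ with $s,t \in \Q$ is unique. The stated bound $\#L_\alpha^k \le \lceil |\D|/a_2\rceil^k$ then follows at once, since a single residue class modulo $a_2$ meets the block $\{0,1,\ldots,|a_0|-1\}$ in at most $\lceil |a_0|/a_2 \rceil$ points, and this bound holds independently at each of the $k$ levels of the tree $\mathcal{T}_\alpha$.
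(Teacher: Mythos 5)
Your argument is correct and is essentially the paper's own proof: both write $N$ in the Brunotte basis, use $a_2\alpha^2+a_1\alpha=-a_0$ to reduce $\alpha N+d$, and arrive at the same congruence $d\equiv u\,a_1+v\,a_0 \pmod{a_2}$ (the paper folds the coefficient matching into rewriting $\alpha N+d$ directly as an integer combination of the basis, while you match coefficients in the $\Q$-basis $\{1,\alpha\}$, but these are the same step). No gaps.
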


\begin{proof}
    If $N \in \Lambda_\alpha$ then there exist unique $\lambda, \mu \in \Z$ such that $N = \lambda a_2 + \mu (a_2 \alpha + a_1)$. Using that $a_2 \alpha^2 + a_1 \alpha = -a_0,$ it holds that
    \begin{equation}
        \begin{split}
            \alpha N + d &= \lambda a_2 \alpha + \mu (a_2\alpha^2 + a_1 \alpha) \\
            &= \lambda (a_2 \alpha + a_1) - \lambda a_1 - \mu a_0 +d,
        \end{split}
    \end{equation}
    so $\alpha N + d \in \Lambda_\alpha$ if and only if $- \lambda a_1 - \mu a_0 +d \in a_2\Z$, that is, if $d \equiv \lambda a_1 + \mu a_0 \mod a_2$.
\end{proof}

The following corollary is immediate.

\begin{corollary}\label{corollary:integerexpansions}
    Let $L_\alpha^k$ be the set of $\alpha$-expansions of length~$k$. Then the cardinality of $L_\alpha^k$ satisfies
    \[ \#L_\alpha^k \leq \left\lceil \frac{|\D|}{a_2} \right\rceil ^k.  \]
\end{corollary}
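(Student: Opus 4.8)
The plan is to obtain the bound on $\#L_\alpha^k$ by tracking how the tree $\mathcal{T}_\alpha$ branches at each level, using Theorem~\ref{theorem:residueclasses} as the single structural input. First I would observe that for any node $N \in \Lambda_\alpha$, Theorem~\ref{theorem:residueclasses} tells us that $D(N) = \D \cap (a_2\Z + r)$ for some residue $r$, so the number of children of $N$ equals the number of digits in $\D = \{0,1,\ldots,|a_0|-1\}$ lying in a single residue class modulo $a_2$. Since $\D$ is a block of $|\D| = |a_0|$ consecutive integers, the count of elements in any fixed residue class modulo $a_2$ is either $\lfloor |\D|/a_2 \rfloor$ or $\lceil |\D|/a_2 \rceil$, and in all cases is at most $\lceil |\D|/a_2 \rceil$. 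Hence every node has at most $\lceil |\D|/a_2 \rceil$ children, regardless of which residue class $r$ arises at that node.

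Next I would translate this uniform bound on the out-degree into a bound on the number of words of each length. Since $L_\alpha$ is prefix-closed and is realized as the set of paths from the root in $\mathcal{T}_\alpha$, a word of length $k$ corresponds to a path of length $k$ from the root, and $\#L_\alpha^k$ is exactly the number of nodes at depth $k$ (after the root). A clean induction on $k$ finishes this: the root (depth $0$) is a single node, giving $\#L_\alpha^0 = 1 \leq \lceil |\D|/a_2 \rceil^0$; and if there are at most $\lceil |\D|/a_2 \rceil^{k}$ nodes at depth $k$, each contributing at most $\lceil |\D|/a_2 \rceil$ children, then depth $k+1$ has at most $\lceil |\D|/a_2 \rceil^{k+1}$ nodes. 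This gives the stated inequality.

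Since the content is essentially a counting argument layered on top of Theorem~\ref{theorem:residueclasses}, there is no serious obstacle; the one point requiring a moment's care is the elementary claim that a set of $m$ consecutive integers meets each residue class modulo $a_2$ in at most $\lceil m/a_2 \rceil$ elements. This is the only place where the specific shape of $\D$ (consecutive integers starting at $0$) is used, and it is what prevents the naive bound $|\D|$ per node. I would state this observation explicitly and note that it holds uniformly in $r$, so that the induction does not need to track which residue class appears at each node.
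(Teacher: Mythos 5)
Your argument is correct and is precisely the one the paper has in mind: the paper states the corollary as an immediate consequence of Theorem~\ref{theorem:residueclasses}, and your write-up simply makes explicit the two ingredients (each residue class modulo $a_2$ meets the block of $|\D|$ consecutive integers in at most $\lceil |\D|/a_2\rceil$ elements, then induct on the depth of $\mathcal{T}_\alpha$). No gaps.
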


We show in Figure~\ref{exampletree2} the tree for $\alpha = \frac{-1+5i}{3}$, root of $9X^2 + 6X + 26$. Note that each node has exactly three children except one whose children are congruent with $8$ modulo $9$, and that is because this is the only class modulo $9$ that has only two representatives in $\D = \{ 0,1,\ldots,25\}$. 

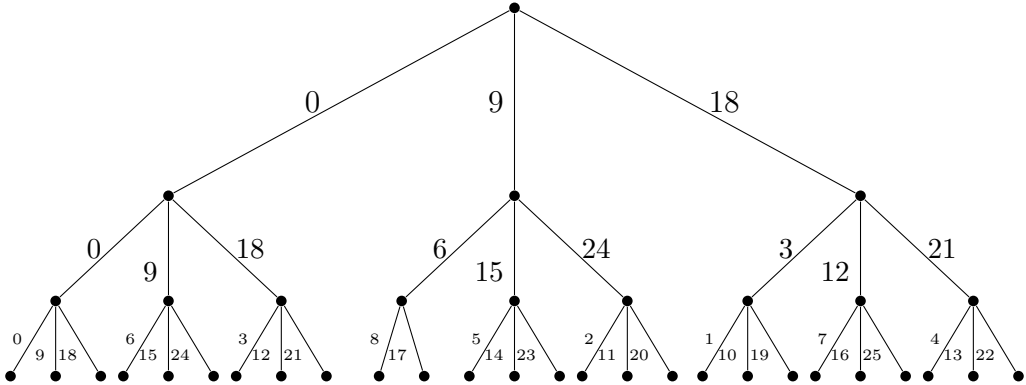
\begin{figure} 

\begin{tikzpicture} 
        [level distance=2cm,
   level 1/.style={sibling distance=4.6cm, level distance = 2.5cm},
   level 2/.style={sibling distance=1.5cm, level distance = 1.4cm},
   level 3/.style={sibling distance=0.6cm, level distance = 1cm}]
    \tikzstyle{filled} = [circle, fill=black, minimum size=4pt, inner sep=0pt]
    
    \node[filled] {}
        child {node[filled] {}
                child {node[filled] {} 
                        child {node[filled] {} edge from parent node[left] {\tiny{0}}}
                        child {node[filled] {} edge from parent node[left, near end] {\tiny{9}}}
                        child {node[filled] {} edge from parent node[left, near end] {\tiny{18}}}
                    edge from parent node[left] {\small{0}}}
                child {node[filled] {}
                        child {node[filled] {} edge from parent node[left] {\tiny{6}}}
                        child {node[filled] {} edge from parent node[left, near end] {\tiny{15}}}
                        child {node[filled] {} edge from parent node[left, near end] {\tiny{24}}}
                    edge from parent node[left, near end] {\small{9}}}
                child {node[filled] {} 
                        child {node[filled] {} edge from parent node[left] {\tiny{3}}}
                        child {node[filled] {} edge from parent node[left, near end] {\tiny{12}}}
                        child {node[filled] {} edge from parent node[left, near end] {\tiny{21}}}
                    edge from parent node[right] {\small{18}}}
            edge from parent node[left] {0\,\,}}
        child {node[filled] {}             
                child {node[filled] {} 
                        child {node[filled] {} edge from parent node[left] {\tiny{8}}}
                        child {node[filled] {} edge from parent node[left, near end] {\tiny{17}}}
                    edge from parent node[left] {\small{6}}}
                child {node[filled] {} 
                        child {node[filled] {} edge from parent node[left] {\tiny{5}}}
                        child {node[filled] {} edge from parent node[left, near end] {\tiny{14}}}
                        child {node[filled] {} edge from parent node[left, near end] {\tiny{23}}}
                    edge from parent node[left, near end] {\small{15}}}
                child {node[filled] {} 
                        child {node[filled] {} edge from parent node[left] {\tiny{2}}}
                        child {node[filled] {} edge from parent node[left, near end] {\tiny{11}}}
                        child {node[filled] {} edge from parent node[left, near end] {\tiny{20}}}
                    edge from parent node[right] {\small{24}}}
            edge from parent node[left] {9}}
        child {node[filled] {} 
                child {node[filled] {} 
                        child {node[filled] {} edge from parent node[left] {\tiny{1}}}
                        child {node[filled] {} edge from parent node[left, near end] {\tiny{10}}}
                        child {node[filled] {} edge from parent node[left, near end] {\tiny{19}}}
                    edge from parent node[left] {\small{3}}}
                child {node[filled] {} 
                        child {node[filled] {} edge from parent node[left] {\tiny{7}}}
                        child {node[filled] {} edge from parent node[left, near end] {\tiny{16}}}
                        child {node[filled] {} edge from parent node[left, near end] {\tiny{25}}}
                    edge from parent node[left, near end] {\small{12}}}
                child {node[filled] {} 
                        child {node[filled] {} edge from parent node[left] {\tiny{4}}}
                        child {node[filled] {} edge from parent node[left, near end] {\tiny{13}}}
                        child {node[filled] {} edge from parent node[left, near end] {\tiny{22}}}
                    edge from parent node[right] {\small{21}}}
            edge from parent node[right] {\,\,18}};
\end{tikzpicture}
\caption{The tree $\mathcal{T}_\alpha$ for $\alpha = \frac{-1+5i}{3}$.}
\label{exampletree2}
\end{figure}

Theorem~\ref{theorem:residueclasses} allows us to create the tree from Figure~\ref{exampletreeresidues}: if we denote by $\overline{r}$ the set $\D \cap (a_2 \Z + r)$, we can group the edges coming down from one node in Figure~\ref{exampletree2} and create a new tree where the corresponding residue class $\overline{r}$ is represented in a node. The nodes in Figure~\ref{exampletreeresidues} are arranged increasingly from left to right, so they are not in the same order as they were in Figure~\ref{exampletree2}. It could be an interesting line of research to find more patterns in these trees.

\begin{figure}
\begin{tikzpicture} 
        [level distance=2cm,
   level 1/.style={sibling distance=4.6cm, level distance = 2.5cm},
   level 2/.style={sibling distance=1.5cm, level distance = 1.4cm},
   level 3/.style={sibling distance=0.4cm, level distance = 1cm}]
    \tikzstyle{filled} = [circle, fill=black, minimum size=4pt, inner sep=0pt]
    
    \node { $\overline{0}$}
        child {node { $\overline{0}$}
                child {node { $\overline{0}$} 
                        child {node { $\overline{0}$} edge from parent node{~}}
                        child {node { $\overline{3}$} edge from parent node{~}}
                        child {node { $\overline{6}$} edge from parent node{~}}
                    edge from parent}
                child {node { $\overline{3}$}
                        child {node { $\overline{1}$} edge from parent node{~}}
                        child {node { $\overline{4}$} edge from parent node{~}}
                        child {node { $\overline{7}$} edge from parent node{~}}
                    edge from parent node{~}}
                child {node { $\overline{6}$} 
                        child {node { $\overline{2}$} edge from parent node{~}}
                        child {node { $\overline{5}$} edge from parent node{~}}
                        child {node { $\overline{8}$} edge from parent node{~}}
                    edge from parent node{~}}
            edge from parent node{~}}
        child {node { $\overline{3}$}             
                child {node { $\overline{1}$} 
                        child {node { $\overline{1}$} edge from parent node{~}}
                        child {node { $\overline{4}$} edge from parent node{~}}
                        child {node { $\overline{7}$} edge from parent node{~}}
                    edge from parent node{~}}
                child {node { $\overline{4}$} 
                        child {node { $\overline{2}$} edge from parent node{~}}
                        child {node { $\overline{5}$} edge from parent node{~}}
                        child {node { $\overline{8}$} edge from parent node{~}}
                    edge from parent node{~}}
                child {node { $\overline{7}$} 
                        child {node { $\overline{0}$} edge from parent node{~}}
                        child {node { $\overline{3}$} edge from parent node{~}}
                        child {node { $\overline{6}$} edge from parent node{~}}
                    edge from parent node{~}}
            edge from parent node{~}}
        child {node { $\overline{6}$} 
                child {node { $\overline{2}$} 
                        child {node { $\overline{1}$} edge from parent node{~}}
                        child {node { $\overline{4}$} edge from parent node{~}}
                        child {node { $\overline{7}$} edge from parent node{~}}
                    edge from parent node{~}}
                child {node { $\overline{5}$} 
                        child {node { $\overline{2}$} edge from parent node{~}}
                        child {node { $\overline{5}$} edge from parent node{~}}
                        child {node { $\overline{8}$} edge from parent node{~}}
                    edge from parent node{~}}
                child {node { $\overline{8}$} 
                        child {node { $\overline{3}$} edge from parent node{~}}
                        child {node { $\overline{6}$} edge from parent node{~}}
                    edge from parent node{~}}
            edge from parent node{~}};
\end{tikzpicture}
\caption{The tree $\mathcal{T}_\alpha$ for $\alpha = \frac{-1+5i}{3}$ but naming each node by the residue class of its children.}
\label{exampletreeresidues} 
\end{figure}

Next, we show that the length of an integer $\alpha$-expansion grows logarithmically with respect to the modulus of $N$. As usual, given a complex number $a+bi$ we denote $|a+bi|=\sqrt{a^2+b^2}$.

\begin{proposition}

If $N \in \Lambda_\alpha$ has an integer $\alpha$-expansion  
\[
N =  \sum_{j=0}^{k} d_j \alpha^j,
\]
then it holds that
\[
    k = \log_{|\alpha|}(|N|)+ \mathcal{O}(1).
\]

\end{proposition}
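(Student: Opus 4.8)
The plan is to establish the estimate $k = \log_{|\alpha|}(|N|) + \mathcal{O}(1)$ by controlling $|N|$ both from above and below in terms of $|\alpha|^k$, using the recurrence $N_j = \alpha N_{j+1} + d_j$ from~\eqref{eq:algoalpha} together with the fact that the digits $d_j$ lie in the bounded set $\D = \{0,\ldots,|a_0|-1\}$. Since $N = d_k\alpha^k + \cdots + d_0$ with $d_k \neq 0$, the leading term $d_k \alpha^k$ should dominate for large $k$, and the task is to quantify this.

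First I would prove the upper bound on $|N|$. By the triangle inequality,
\[
    |N| = \Bigl| \sum_{j=0}^{k} d_j \alpha^j \Bigr| \leq \sum_{j=0}^{k} |d_j|\,|\alpha|^j \leq (|a_0|-1) \sum_{j=0}^{k} |\alpha|^j \leq (|a_0|-1)\,\frac{|\alpha|^{k+1}}{|\alpha|-1},
\]
using $|\alpha| > 1$ to sum the geometric series. Taking logarithms in base $|\alpha|$ gives $\log_{|\alpha|}(|N|) \leq k + C_1$ for a constant $C_1$ depending only on $\alpha$ and $\D$, so $k \geq \log_{|\alpha|}(|N|) - C_1$. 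This direction is routine.

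The lower bound on $|N|$ is the main obstacle, since naively bounding $|d_k \alpha^k|$ from below requires subtracting the tail, and the tail $\sum_{j=0}^{k-1} d_j \alpha^j$ need not be negligible a priori. The clean way I would handle this is to avoid working with the evaluated sum directly and instead use the dynamics: the recurrence $N_j = T_\alpha^{-1}$-type relation gives $N_{j+1} = \frac{N_j - d_j}{\alpha}$, so $|N_{j+1}| \leq \frac{|N_j| + (|a_0|-1)}{|\alpha|}$. Iterating this contraction from $N_0 = N$ shows that $|N_j|$ decays geometrically; and since the algorithm reaches $N_{k+1} = 0$ but $N_k \neq 0$ (as $d_k \neq 0$ forces $N_k = d_k \neq 0$, indeed $|N_k| \geq 1$ because $N_k$ is a nonzero element of the lattice $\Lambda_\alpha$), one gets a lower bound by running the recurrence backward. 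Concretely, from $N_k = \alpha N_{k+1} + d_k = d_k$ and $N_{j} = \alpha N_{j+1} + d_j$, a backward induction yields $|N| = |N_0| \geq c\,|\alpha|^k$ for some constant $c > 0$ depending on $\alpha$ and $\D$: one shows $|N_j| \geq |\alpha|^{k-j}|N_k| - (|a_0|-1)\sum_{t=0}^{k-j-1}|\alpha|^t \geq |\alpha|^{k-j}\bigl(|N_k| - \tfrac{|a_0|-1}{|\alpha|-1}\bigr)$, which is useful only if the bracketed quantity is positive.

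The delicate point, and where I expect the real work to be, is that $|N_k| = |d_k|$ might be smaller than $\frac{|a_0|-1}{|\alpha|-1}$, so the crude backward estimate can fail to give a positive lower bound directly. To fix this I would instead argue that because $|N_j|$ is controlled by the contraction, there must be a uniform lower bound on how fast it can have grown: since $N_{j+1} = \frac{N_j - d_j}{\alpha}$ keeps $|N_j|$ within a bounded neighborhood dictated by the attractor of the map $x \mapsto \frac{x-d}{\alpha}$, and $N$ is the preimage under $k+1$ steps of $0$, the quantity $|N|$ cannot be too small relative to $|\alpha|^k$ — more precisely, if $|N| < c|\alpha|^k$ for small enough $c$, then the forward orbit would reach $0$ in fewer than $k$ steps, contradicting minimality of $k$. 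Making this rigorous reduces to showing that once $|N_j|$ falls below the fixed threshold $\frac{|a_0|-1}{|\alpha|-1}$, only finitely many (a bounded number of) further steps are needed to reach $0$; this bounded number absorbs into the $\mathcal{O}(1)$. Combining the two bounds gives $\log_{|\alpha|}(|N|) = k + \mathcal{O}(1)$, which is the claim.
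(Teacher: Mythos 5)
Your upper bound is exactly the paper's: triangle inequality plus the geometric series, giving $k \geq \log_{|\alpha|}(|N|) - C_1$. For the lower bound on $|N|$ you diverge, and the divergence is not merely stylistic. The paper uses precisely the naive estimate you flag as problematic: it writes $|N| \geq |d_k||\alpha|^k - (|a_0|-1)\frac{|\alpha|^k-1}{|\alpha|-1} \geq \bigl(1 - \frac{|a_0|-1}{|\alpha|-1}\bigr)|\alpha|^k + \frac{|a_0|-1}{|\alpha|-1}$ and then "takes logarithms." But since $|\alpha| = \sqrt{|a_0|/a_2} < |a_0|$ for every admissible base, the coefficient $1 - \frac{|a_0|-1}{|\alpha|-1}$ is always negative, so the paper's lower bound is eventually vacuous and its proof of this direction has a genuine gap. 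Your dynamical repair is the right one: the ball of radius $R = \frac{|a_0|-1}{|\alpha|-1}$ (or a slightly larger one) is forward-invariant under $N \mapsto \frac{N-d}{\alpha}$, the contraction estimate $|N_j| \leq |\alpha|^{-j}|N| + R$ forces the orbit to enter it within $\log_{|\alpha|}(|N|) + \mathcal{O}(1)$ steps, and once inside, the orbit visits pairwise distinct lattice points of $\Lambda_\alpha$ (a repeat would create a cycle and the orbit would never reach $0$, contradicting the hypothesis that $N$ has an integer $\alpha$-expansion), so at most finitely many further steps — a constant depending only on $\alpha$ — are needed to reach $0$. That bounds $k$ from above by $\log_{|\alpha|}(|N|) + \mathcal{O}(1)$ and completes the claim. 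In short: your argument costs a little more bookkeeping (invariant ball, counting lattice points, no-cycle observation) but it is actually valid for all bases covered by the proposition, whereas the paper's shorter computation only works under the hypothesis $|\alpha| > |a_0|$, which never holds here.
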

\begin{proof}
    Using geometric sums and the fact that \( 0 \leq d_j \leq |a_0|-1 \) we get the upper bound
\[
|N| = \left| \sum_{j=0}^{k} d_j \alpha^j \right| \leq \sum_{j=0}^{k} |d_j| |\alpha|^j \leq (|a_0|-1) \sum_{j=0}^{k} |\alpha|^j = (|a_0|-1) \frac{|\alpha|^{k+1}-1}{|\alpha|-1}.\]

For the lower bound,
\[
|N| \geq |d_k| |\alpha|^k - \sum_{j=0}^{k-1} |d_j| |\alpha|^j.
\]
Since \( 1 \leq d_k \leq |a_0|-1 \) and \( 0 \leq d_j \leq |a_0|-1 \) for \( j < k \) we get:
\[
|N| \geq |\alpha|^k - (|a_0|-1) \sum_{j=0}^{k-1} |\alpha|^j = |\alpha|^k - (|a_0|-1) \frac{|\alpha|^k - 1}{|\alpha|-1} = \left( 1 - \frac{|a_0|-1}{|\alpha|-1} \right) |\alpha|^k + \frac{|a_0|-1}{|\alpha|-1}.
\]
Therefore
    \[
\left( 1 - \frac{|a_0|-1}{|\alpha|-1} \right) |\alpha|^k + \frac{|a_0|-1}{|\alpha|-1} \leq |N| \leq (|a_0|-1) \frac{|\alpha|^{k+1}-1}{|\alpha|-1}.
\]
Taking the logarithm in base $|\alpha|$ on the inequality yields the desired result.
\end{proof}



\subsection{Algorithms for addition and multiplication}\label{Algorithms for addition and multiplication}

Given a number system $(\alpha, \D)$ with the finiteness property in $\Lambda_\alpha$ and the expansions of two points $N, M  \in \Lambda_\alpha$, we can compute the integer $\alpha$-expansion of $N+M$. When adding digit-wise, starting from the right, if the sum of the digits exceeds $|a_0|-1$ we need to subtract $|a_0|$ and carry the integer $\alpha$-expansion of $\frac{|a_0|}{\alpha}$ to the next digits to the left. It holds that $\frac{|a_0|}{\alpha} \in \Lambda_\alpha$: we have $a_0 = -a_2\alpha^2-a_1\alpha$ and therefore $\frac{|a_0|}{\alpha} = |a_2 \alpha + a_1| \in \Lambda_\alpha$. The algorithm ends after all digits have been added and there is no carry, or after a sufficiently large trail of zeroes appears to the left. One can analogously compute the expansion of $N \cdot M$.



\begin{example}
    Let $\alpha = \frac{-1+3i}{2}$ as before. We have $\frac{|a_0|}{\alpha} = -1-3i = (203)_\alpha$. For the sum, consider $-8 = (442)_\alpha$ and $5-3i = (2234)_\alpha$. We see in \eqref{sumandproduct} (left) that $-3-3i = (201)_\alpha$. For the multiplication, consider $-2 = (223)_\alpha$ and $6i = (42)_\alpha$. It is shown in \eqref{sumandproduct} (right) that $-12i = (2232141)_\alpha$. The auxiliary computations are depicted in gray, and the carriers are at the bottom.
    \begin{equation}\label{sumandproduct}
        \begin{array}[t]{*{7}{r}}
                & & &  4 & 4 & 2 \\
              &+ & 2 & 2 & 3 & 4 \\ \hline
            \color{gray} \ldots \; \color{gray} 10 &  \color{gray} 10 & \color{gray} 10 & \color{gray} 12  & \color{gray} 10 &  \color{gray}6 \\
            \ldots \; 0 & 0 & 0 & 2 & 0 & 1 \\ \hline
            & &  \color{gray} 2 & \color{gray}0 & \color{gray}3 & \\
            &  \color{gray} 2 & \color{gray}0 & \color{gray}3 & &\\
            &  \color{gray} 2 & \color{gray}0 & \color{gray}3 &  & \\
            \color{gray} 2 & \color{gray}0 & \color{gray}3 & &  \\
             \color{gray} 2 & \color{gray}0 & \color{gray}3 & &  \\
             & \color{gray} \vdots &
        \end{array}
            \,\, \,\,\,\, \;\;\;\;\;\;\;
        \begin{array}[t]{*{7}{r}}              
             && & & 2 & 2 & 3  \\ 
              & & & & \times  & 4 & 2 \\ \hline
               & &  &  & \color{gray} 4  & \color{gray} 4 &  \color{gray}6 \\
               & & \color{gray} + & \color{gray} 8 & \color{gray} 8  & \color{gray} 12 &  \color{gray}0 \\ \hline
            & &  & \color{gray} 8 & \color{gray} 12  & \color{gray} 16 &  \color{gray}6 \\
            2 & 2 & 3 & 2 & 1 & 4 & 1 \\ \hline
           & & &  \color{gray} 2 & \color{gray}0 & \color{gray}3 & \\
           & &  \color{gray} 2 & \color{gray}0 & \color{gray}3 & &\\
           & &  \color{gray} 2 & \color{gray}0 & \color{gray}3 &  & \\
           & &  \color{gray} 2 & \color{gray}0 & \color{gray}3 & &\\
           & \color{gray} 2 & \color{gray}0 & \color{gray}3 & &  \\
           &  \color{gray} 2 & \color{gray}0 & \color{gray}3 & &  \\
             & \color{gray} \vdots &
        \end{array}
    \end{equation}

\end{example}

\section{Expansions of complex numbers}\label{Expansion of complex numbers}

Our next step in the study of algebraic number systems is to consider expansions of complex numbers, by introducing a radix point, or in other words, by allowing digits $d_j$ where $j<0$.

    \begin{definition}
    An \textbf{$\alpha$-expansion} of a complex number $x$ is an expansion of the form

    \begin{equation}\label{eq:alphaexpansion}
        x = \sum_{j \leq k} d_j \alpha^j \qquad (d_j\in\mathcal{D})
    \end{equation}
    
    with $d_k \neq 0$, such that $(d_k \ldots d_l)_{\alpha}$ corresponds to the integer $\alpha$-expansion of some $N \in \Lambda_\alpha$ for every $l \leq k$. We denote it
    \[
        x = (d_k \ldots d_0 . d_{-1}d_{-2}\ldots)_{\alpha}.
    \]
    It follows from the definition of $\mathcal{T}_\alpha$ that \eqref{eq:alphaexpansion} is an $\alpha$-expansion of $x$ if and only if the sequence $(d_j)_{j \leq k}$ corresponds to an infinite downward path in the tree $\mathcal{T}_\alpha$ starting from the root.
    \end{definition}

\begin{remark}
    If we add a trail of zeroes to the right of the integer $\alpha$-expansion of $N \in \Lambda_\alpha$, in general, we do not get an $\alpha$-expansion of $N$: take, for example, $\alpha = \frac{-1+3i}{2}$. Then $2 = (2)_\alpha$, however $(2.0000\ldots)_\alpha$ is not an $\alpha$-expansion of $2$, because the path $200$ is not in the tree $\mathcal{T}_\alpha$. 
\end{remark}

Next, we give an algorithm to approximate an $\alpha$-expansion of a given $x \in \C$. As a first step, we introduce a function $\lambda_\alpha$ that maps $x$ to a nearby point in $\Lambda_\alpha$.

Regard $\C$ as a two dimensional $\R$-vector space with basis $\{1,i\}$. Let $\mathcal{B}_\alpha = \{ a_2, a_2\alpha + a_1 \}$ be the Brunotte basis of $\Lambda_\alpha$. Consider the linear isomorphism
\begin{equation}
    f: \C \rightarrow \C, \qquad 
    \begin{cases}
        1 \mapsto a_2,\\
        i \mapsto a_2 \alpha + a_1.
    \end{cases}
\end{equation}
Consider the complex floor function defined as $\lfloor a + bi \rfloor := \lfloor a \rfloor + \lfloor b \rfloor i$. Define
\[
    \lambda_\alpha: \C \rightarrow \Lambda_\alpha, \qquad x \mapsto f(\lfloor f^{-1}(x) \rfloor).
\]

\begin{algorithm}\label{algorithmcomplexexpansion}
For $n \in \N$:
\begin{enumerate}
    \item Compute $\lambda_\alpha( \alpha^n x) \in \Lambda_\alpha.$
    \item Find its integer $\alpha$-expansion $\lambda_\alpha( \alpha^n x) = (d_k \ldots d_0)_\alpha .$
    \item Let $\textbf{w}_n := d_k \ldots d_{k-n} . d_{k-n-1} \ldots d_0 00 \ldots \in \D^{\omega}$.
\end{enumerate}

\end{algorithm}

Algorithm \ref{algorithmcomplexexpansion} produces a sequence of expansions in base $\alpha$. They are, in general, not $\alpha$-expansions, but we will see that they converge to an $\alpha$-expansion of $x$ in the following metric: given any $\textbf{x} = (x_j)_{j\in\N}$ and $\textbf{y} = (y_j)_{j\in\N}$ in $\D^{\omega}$ consider the distance $dist(\textbf{x}, \textbf{y}) = 2^{ -\inf \{ j \in \N \,:\, x_j \neq y_j \} }$.

\begin{proposition}\label{convergencealgorithm}
    The sequence $\textbf{w}_n$ from Algorithm \ref{algorithmcomplexexpansion} converges to an $\alpha$-expansion of~$x$.
\end{proposition}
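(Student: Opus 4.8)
The plan is to control first the \emph{value} represented by $\textbf{w}_n$, then to upgrade this to convergence of the digit strings in $(\D^\omega, dist)$, and finally to identify the limit as a downward path in $\mathcal{T}_\alpha$ evaluating to $x$. Write $M_n := \lambda_\alpha(\alpha^n x)\in\Lambda_\alpha$. By construction $\textbf{w}_n$ is the integer $\alpha$-expansion of $M_n$ with the radix point shifted $n$ places, so it represents $y_n := \alpha^{-n}M_n$. Since $\lambda_\alpha(z) = f(\lfloor f^{-1}(z)\rfloor)$ and $f$ is a fixed $\R$-linear isomorphism, the rounding error is uniformly bounded: $|z - \lambda_\alpha(z)| = |f(\{f^{-1}(z)\})|\le C$ for a constant $C=C(\alpha)$ depending only on $f$ and the complex floor. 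Hence $|y_n - x| = |\alpha|^{-n}\,|M_n - \alpha^n x|\le C|\alpha|^{-n}\to 0$, so $\pi_\alpha(\textbf{w}_n)\to x$ in $\C$.

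To pass from values to digits I would first fix a common frame: by the length estimate of the previous section the leading nonzero digit of $\textbf{w}_n$ sits at relative degree $\log_{|\alpha|}|x| + \mathcal{O}(1)$, which is bounded, so after padding with leading zeros all $\textbf{w}_n$ may be regarded as elements of one copy of $\D^\omega$ indexed by a fixed decreasing sequence of powers of $\alpha$. In this frame the digit of $\textbf{w}_n$ at degree $d$ equals $\mathrm{lsb}(T_\alpha^{\,n+d}(M_n))$, where $\mathrm{lsb}(P)$ is the unique element of $\D$ with $P-\mathrm{lsb}(P)\in\alpha\Lambda_\alpha$ (Lemma~\ref{standarddigit}), because $T_\alpha$ strips the least significant digit. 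A geometric-sum estimate gives $|T_\alpha^{\,n+d}(M_n)-\alpha^{-d}x|\le C''$, so each $T_\alpha^{\,n+d}(M_n)$ lies in a \emph{finite} subset of $\Lambda_\alpha$. Convergence of $(\textbf{w}_n)$ in $dist$ is therefore equivalent to the statement that, for every fixed $d$, the lattice point $T_\alpha^{\,n+d}(M_n)$ is eventually constant in $n$.

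The engine for this is a recursion between consecutive roundings. Since the floor in $f$-coordinates is unchanged by adding a lattice point, $\lambda_\alpha$ satisfies $\lambda_\alpha(v+z)=v+\lambda_\alpha(z)$ for $v\in\Lambda_\alpha$; applying this to $\alpha^{n+1}x=\alpha M_n+\alpha\epsilon_n$ with $\epsilon_n=\alpha^n x-M_n$ yields
\[
    M_{n+1}=\alpha M_n+\delta_n,\qquad \delta_n:=\lambda_\alpha(\alpha\epsilon_n)\in\Lambda_\alpha,\quad |\delta_n|\le C',
\]
and hence $T_\alpha(M_{n+1})=M_n+T_\alpha(\delta_n)$, so that $T_\alpha^{\,n+1+d}(M_{n+1})=T_\alpha^{\,n+d}(M_n+T_\alpha\delta_n)$. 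This last quantity equals $T_\alpha^{\,n+d}(M_n)$ exactly when adding the bounded lattice element $T_\alpha\delta_n$ to $M_n$ alters no digit of degree $\ge n+d$. \emph{This is the main obstacle}: I must forbid arbitrarily long carry propagation, since a single altered digit at such a high degree forces a chain of at least $n+d$ consecutive digits of $M_n$ to be saturated in the sense dictated by $T_\alpha$. I expect to rule this out by combining three ingredients: the discreteness just noted (only finitely many candidate values of $T_\alpha^{\,n+d}(M_n)$), the value bound $|y_{n+1}-y_n|=|\alpha|^{-(n+1)}|\delta_n|\le C'|\alpha|^{-(n+1)}$, and the rigidity of Theorem~\ref{theorem:residueclasses}, which at each node admits only the single residue class modulo $a_2$ and thus sharply constrains how a carry can travel up $\mathcal{T}_\alpha$. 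Together these should force $T_\alpha^{\,n+d}(M_n)$ to stabilize, giving a well-defined limit digit at each degree $d$ and hence $\textbf{w}_n\to\textbf{w}$ in $(\D^\omega,dist)$.

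It remains to check that $\textbf{w}$ is genuinely an $\alpha$-expansion of $x$. For any fixed prefix length $p$ and all large $n$, the first $p$ digits of $\textbf{w}$ coincide with the first $p$ digits of the integer $\alpha$-expansion of $M_n$, which form a word in $L_\alpha$; since $L_\alpha$ is prefix-closed and $\mathcal{T}_\alpha$ is exactly the tree of such words, $\textbf{w}$ is an infinite downward path in $\mathcal{T}_\alpha$, i.e. an $\alpha$-expansion. Finally $\pi_\alpha$ is continuous from this common frame of $(\D^\omega,dist)$ to $\C$, as two words agreeing on their first $p$ digits have images differing by at most a constant times $|\alpha|^{-p}$. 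Therefore $\pi_\alpha(\textbf{w})=\lim_n\pi_\alpha(\textbf{w}_n)=\lim_n y_n=x$, so $\textbf{w}$ is an $\alpha$-expansion of $x$, as required.
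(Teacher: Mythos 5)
Your first and last steps are sound and coincide with the paper's: the value estimate $|\lambda_\alpha(z)-z|\le C$ gives $\pi_\alpha(\textbf{w}_n)\to x$, and once convergence in $\D^\omega$ is known, prefix-closedness of $L_\alpha$ plus continuity of $\pi_\alpha$ identifies the limit as an $\alpha$-expansion of $x$. The recursion $M_{n+1}=\alpha M_n+\delta_n$, hence $T_\alpha(M_{n+1})=M_n+T_\alpha(\delta_n)$ with $\delta_n$ uniformly bounded, is also correct and is a nice reformulation. But the heart of the proposition is exactly the step you flag as ``the main obstacle'' and then do not prove: you write that discreteness, the value bound, and Theorem~\ref{theorem:residueclasses} ``should force'' $T_\alpha^{\,n+d}(M_n)$ to stabilize. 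As stated, these three ingredients do not suffice. Discreteness plus the bound $|T_\alpha^{\,n+d}(M_n)-\alpha^{-d}x|\le C''$ only confines $T_\alpha^{\,n+d}(M_n)$ to a finite set; nothing prevents it from oscillating forever between two members of that set. Theorem~\ref{theorem:residueclasses} constrains which digit may follow a given node, but says nothing about how far up the tree a carry caused by adding $T_\alpha(\delta_n)$ can travel. So the argument, as written, has a genuine gap precisely where the work is.

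What is missing is a quantitative rigidity statement: if $P,Q\in\Lambda_\alpha$ have integer $\alpha$-expansions that agree at all indices above $l$ and differ at index $l$, then $|P-Q|$ is at least of the order $|\alpha|^l$; equivalently, a uniformly bounded perturbation of a lattice point can alter only a suffix of uniformly bounded length of its expansion. This is how the paper closes the argument: it compares $\langle N_n\rangle_\alpha$ with $\langle T_\alpha(N_{n+1})\rangle_\alpha$, notes that their difference $\tfrac{\varepsilon_{n+1}}{\alpha}-\varepsilon_n-\tfrac{d_0}{\alpha}$ is bounded uniformly in $n$, and deduces that the largest index $l$ at which the two expansions disagree is bounded, so $(\textbf{w}_n)$ is Cauchy in the complete space $\D^\omega$. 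You already have all the pieces needed to prove this rigidity (for instance, apply $T_\alpha^{\,l}$ to both points and observe that the results differ by the nonzero integer $d'_l-d_l$, while each $T_\alpha^{\,l}$ contracts by $\alpha^{-l}$ up to a geometric-series error), but without carrying it out your proof does not go through, and the residue-class theorem is not the tool that replaces it.
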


\begin{proof}
    We first show that $x = \lim_{n \to \infty} \pi_\alpha(\textbf{w}_n) $, where $\pi_\alpha$ is the evaluation map in base $\alpha$. It is clear that $$\pi_\alpha(\textbf{w}_n) = \alpha^{-n} \pi_\alpha(d_k \ldots d_0) = \alpha^{-n} \lambda_\alpha( \alpha^n x).$$

    We have 
    $$\lambda_\alpha(\alpha^n x) = f(\lfloor f^{-1}(\alpha^n x) \rfloor) = f(f^{-1}(\alpha^n x) + \{f^{-1}(\alpha^n x)\}),$$ 
    where $\{ z \} = z - \lfloor z \rfloor $ is the complex fractional part. Since $f$ is a linear map, 
    $$ f(f^{-1}(\alpha^n x) + \{f^{-1}(\alpha^n x)\}) = \alpha^n x + f(\{f^{-1}(\alpha^n x)\}) .$$
    By definition of the complex floor, it holds that $| \{ z \} | < 2$ for any complex number $z$. Since $f$ is linear, it is bounded, and so $\varepsilon_n :=  f(\{f^{-1}(\alpha^n x) \} )$ is bounded. We get
    $$ \pi_\alpha(\textbf{w}_n) = \alpha^{-n}(\alpha^n x + \varepsilon_n) \xrightarrow[n \to \infty]{} x .$$
    
     We show now that the sequence $\textbf{w}_n$ converges in $\D^{\omega}$ to some $\textbf{w}$. For that we need to show that $N_n =  \lambda_\alpha(\alpha^n x)$ and $N_{n+1} = \lambda_\alpha(\alpha^{n+1} x)$ have integer $\alpha$-expansions with a common prefix whose length goes to infinity as $n$ grows. We have

    \[
        T_\alpha(N_{n+1}) - N_n = \frac{N_{n+1}-d_0}{\alpha} - N_n =  \frac{ \varepsilon_{n+1}}{\alpha} - \varepsilon_n - \frac{d_0}{\alpha}.
    \]

    The integer $\alpha$-expansion of $T_\alpha(N_{n+1})$ is the integer $\alpha$-expansion of $N_{n+1}$ with the last digit $d_0$ removed. Let $\langle N_n \rangle_\alpha = d_k \ldots d_0$ and $\langle T_\alpha(N_{n+1})\rangle_\alpha = d'_k\ldots d_0'$ where we patch one of the expansions with zeroes to the left so that they have the same length. Let $l$ be the largest index so that $d_l \neq d'_l$. Then $|\alpha|^l \leq |T_\alpha(N_{n+1}) - N_n| = \left|  \frac{ \varepsilon_{n+1}}{\alpha} - \varepsilon_n - \frac{d_0}{\alpha} \right|$, which is bounded, and therefore $l$ is bounded. As a consequence, the distance in $\D^{\omega}$ of $\textbf{w}_n$ and $\textbf{w}_{n+1}$ tends to $0$ as $n$ goes to infinity. Therefore, the sequence is Cauchy and since $\D^{\omega}$ is complete, it converges to $\textbf{w} \in \D^{\omega}$. Since the prefixes of $\textbf{w}$ are integer $\alpha$-expansions by definition, $\textbf{w}$ is an $\alpha$-expansion of $x$.
\end{proof}

We illustrate this algorithm with our recurrent example.

\begin{example}\label{examplesqrt2}
    Let $\alpha = \frac{-1+3i}{2}$, then $\Lambda_\alpha$ is spanned by $\{ 2,1+ 3i\}$. The map $\lambda_\alpha: \C \rightarrow \Lambda_\alpha$ is given by
    \[
        \lambda_\alpha(x+yi) = 2 \left\lfloor \frac{x}{2} - \frac{y}{6} \right\rfloor + \left\lfloor \frac{y}{3} \right\rfloor + 3 \left\lfloor \frac{y}{3} \right\rfloor i.
    \]
    We want to find the $\alpha$-expansion of $\sqrt{2}$. Table \ref{approxsqrt2} shows $ \lambda_\alpha( \alpha^n \sqrt{2})$, its integer $\alpha$-expansion and $\textbf{w}_n$ for $n=1,2,\ldots,15$. The approximation for $n=50$ is $$2.23411214244400202412000344114424444410323402111430.$$ Applying $\pi_\alpha$ to this digit string gives as a result $$1.414213562226875 + 4.779186057674623 \cdot 10^{-11}i,$$ which is a good approximation of $\sqrt{2}$.
    
\end{example}

We mention that, in the definition of the map $\lambda_\alpha$, we could have used the ceiling function $\lceil \cdot \rceil$ in the real and/or imaginary parts instead of the floor. In all cases, the expansion obtained is the same.

   \begin{table}
        \centering
        \begin{tabular}{ccc}
        \toprule
        $\lambda_\alpha( \alpha^n \sqrt{2})$ & $\langle \lambda_\alpha( \alpha^n \sqrt{2}) \rangle_\alpha$ & $\textbf{w}_n$\\
        \cmidrule(lr){1-3}
        -2 & 223 &
22.30000000000000\ldots \\
- 5-3i  & 424 &
4.240000000000000\ldots \\
 2-6i  & 2210 &
2.210000000000000\ldots \\
6i & 42 &
0.004200000000000\ldots \\
- 15 -3i & 201114 & 2.011140000000000\ldots \\
 7-21i  & 2234110 &
2.234110000000000\ldots \\
 25+21i & 22341322 &
2.234132200000000\ldots \\
- 49+27i  & 223413440 &
2.234134400000000\ldots \\
- 23 -87i & 2234112343 &
2.234112343000000\ldots \\
 137 + 9i  & 22341121400 &
2.234112140000000\ldots \\
- 85+201i  & 223411214222 &
2.234112142220000\ldots \\
- 260 -228i & 2234112142444 &
2.234112142444000\ldots \\
 470-276i  & 22341121422413 &
2.234112142241300\ldots \\
177 + 843i  & 223411214222103 &
2.234112142221030\ldots \\
- 1358 -156i  & 2234112142444000 &
2.234112142444000\ldots \\
\bottomrule
        \end{tabular}
        \caption{Approximating the $\alpha$-expansion of $\sqrt{2}$ for $\alpha = \frac{-1+3i}{2}$.}
        \label{approxsqrt2}
    \end{table}

\section{Tilings for algebraic number systems}\label{Tilings for alpha expansions}

We introduce next a family of sets that we denote $\mathcal{G}_\alpha(N)$, originally studied in~\citep{MR3391902}, where they are called intersective tiles. $\mathcal{G}_\alpha(N)$ is the set of points in $\C$ with an $\alpha$-expansion whose integer part is $N$. We will show later on that these sets form a tiling of the complex plane.

\begin{definition}
    Let $(\alpha, \D)$ be an algebraic number system and $N \in \Lambda_\alpha$. Let
    \[ 
        \mathcal{G}_\alpha(N) := \{ x \in \C\,:\, x = (d_k\ldots d_0.d_{-1}d_{-2}\ldots)_\alpha \mbox{ and } N=(d_k \ldots d_0)_\alpha \}.
    \]    
\end{definition}



In Figure~\ref{fig:label tiles lattice}, we show $\mathcal{G}_\alpha(N)$ for $\alpha = \frac{-1+3i}{2}$ for $N \in \Lambda_\alpha$ whose integer $\alpha$-expansion has length at most three. To each picture corresponds one level of the tree $\mathcal{T}_\alpha$ from Figure~\ref{exampletree}. The lattice points are labeled (the place of the label is where the corresponding point is), as well as the digits of their integer $\alpha$-expansion. We chose a four-coloring for this tiling, and one can observe that, even though all of the tiles seem to be different, the ones of the same color have a similar shape. We will explain this in the next section.


\begin{figure}
    \centering
    \includegraphics[width=0.4\linewidth]{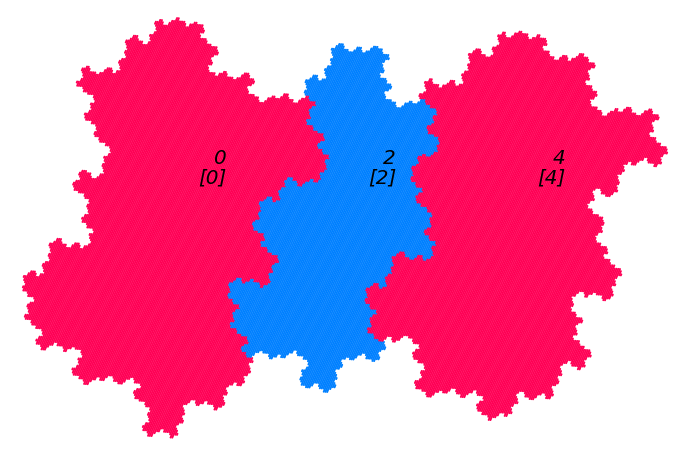}\,\includegraphics[width=0.4\linewidth]{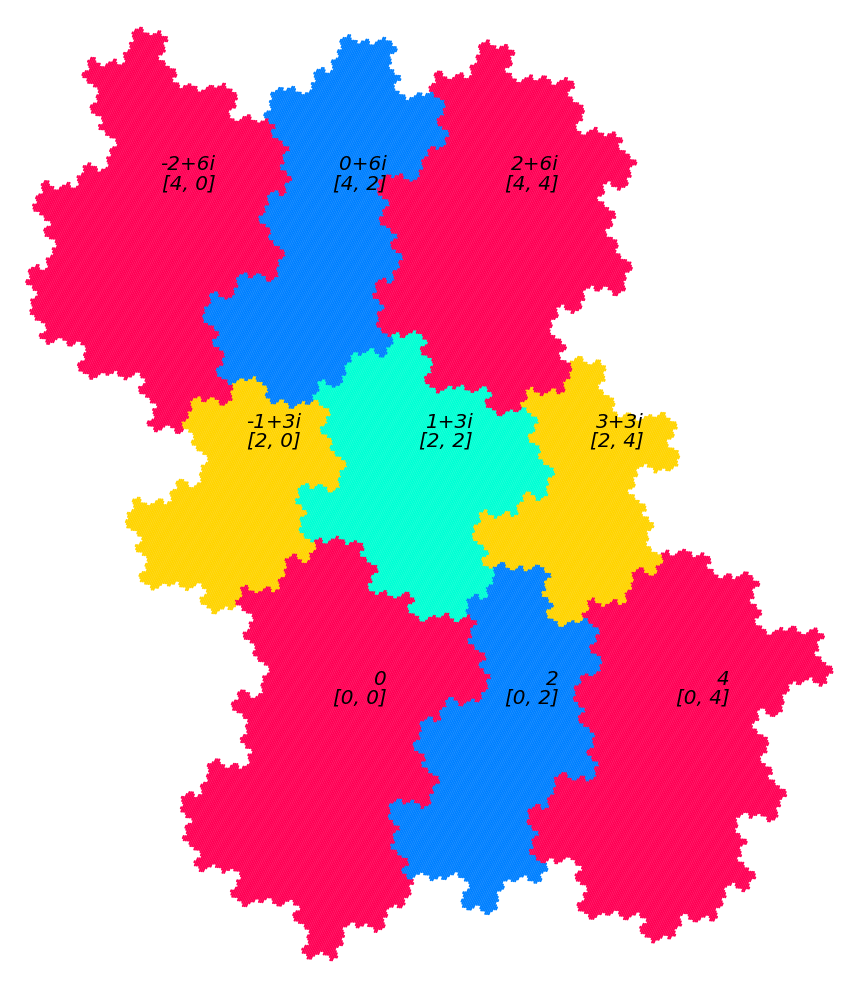}
    
    \includegraphics[width=0.7\linewidth]{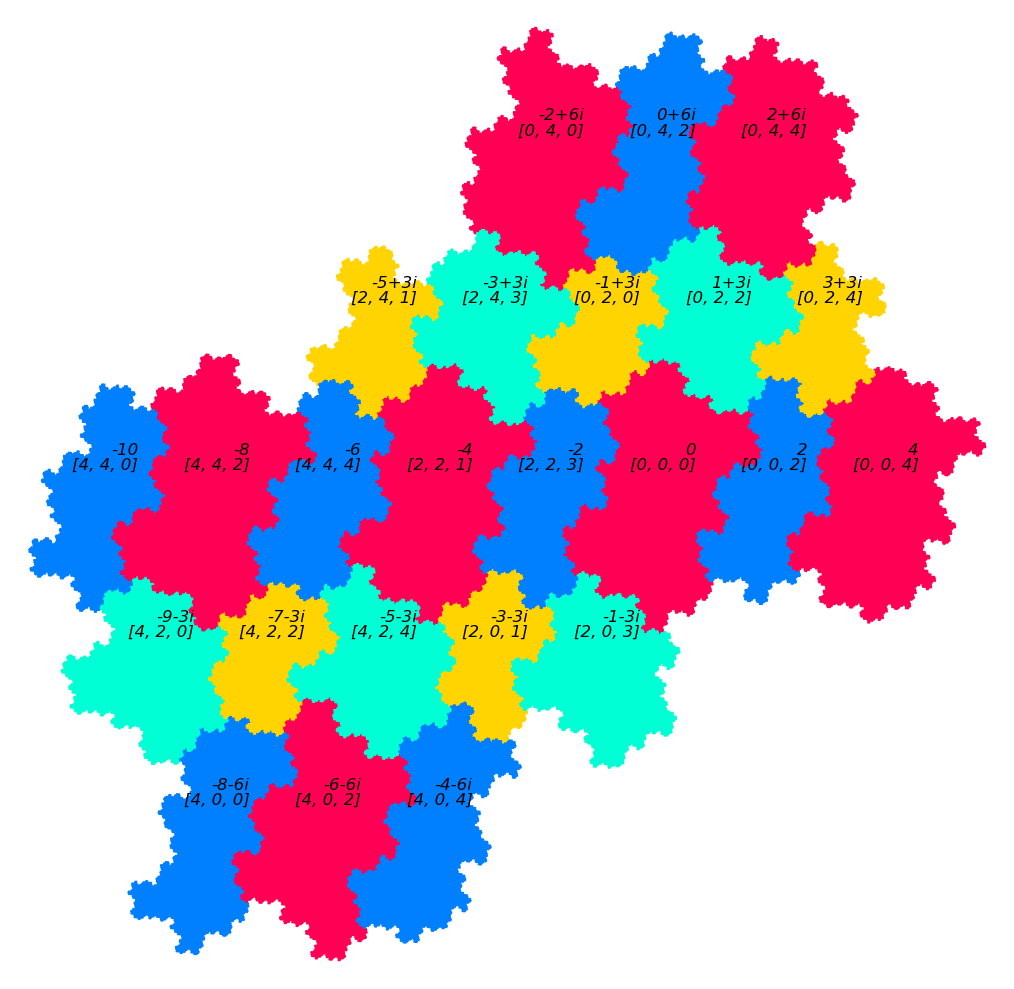}
    \caption{Tiles $\mathcal{G}_\alpha(N)$ for $\alpha = \frac{-1+3i}{2}$ and $N \in \Lambda_\alpha$}
    \label{fig:label tiles lattice}
\end{figure}

  \begin{definition}
        A collection $\mathcal{C}$ of compact subsets of $\C$ is called a \textbf{tiling} if it is a covering of $\C$ that is pairwise essentially disjoint, that is, the intersection of any two distinct sets has Lebesgue measure zero.
    \end{definition}

    \begin{proposition}\label{proposition:tiling}
        Let $(\alpha, \D)$ be an algebraic number system. Then the boundary of $ \mathcal{G}_\alpha(N) $ has Lebesgue measure zero for every $N \in \Lambda_\alpha$ and the collection $\mathcal{C} = \{ \mathcal{G}_\alpha(N) \,:\, N \in \Lambda_\alpha \}$ forms a tiling of $\C$.        
    \end{proposition}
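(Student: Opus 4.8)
The plan is to exhibit the family $\{\mathcal{G}_\alpha(N)\}_{N\in\Lambda_\alpha}$ as the attractor of a graph-directed self-affine system, deduce compactness and the covering property directly from the tree structure and Proposition~\ref{convergencealgorithm}, and then control overlaps and boundaries by identifying these sets with the intersective tiles of~\cite[Theorem 3]{MR3391902}.

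First I would read the defining set equation off the tree $\mathcal{T}_\alpha$. A point $x=(d_k\ldots d_0.d_{-1}d_{-2}\ldots)_\alpha$ lies in $\mathcal{G}_\alpha(N)$ with $N=(d_k\ldots d_0)_\alpha$ exactly when the first fractional digit satisfies $d_{-1}\in D(N)$ and the shifted point $\alpha x=(d_k\ldots d_0 d_{-1}.d_{-2}\ldots)_\alpha$ lies in $\mathcal{G}_\alpha(\alpha N+d_{-1})$. By Theorem~\ref{theorem:residueclasses} the admissible digits $D(N)$ form a single residue class intersected with $\D$, and iterating the observation yields the graph-directed iterated function system
\[
    \mathcal{G}_\alpha(N)=\bigcup_{d\in D(N)}\alpha^{-1}\,\mathcal{G}_\alpha(\alpha N+d),
\]
in which every map is the similarity $z\mapsto\alpha^{-1}z$ of ratio $|\alpha|^{-1}<1$. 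Since the admissible fractional tails form a closed subset of the compact space $\D^{\omega}$ and the evaluation $(d_{-j})_{j\geq1}\mapsto\sum_{j\geq1}d_{-j}\alpha^{-j}$ is continuous, each $\mathcal{G}_\alpha(N)$ is compact; in fact $\mathcal{G}_\alpha(N)\subseteq N+\overline{B}\bigl(0,(|a_0|-1)/(|\alpha|-1)\bigr)$. The uniform contraction ratio together with this uniform boundedness pins down the family as the unique attractor of the system and explains the fractal boundaries. For the covering $\bigcup_{N}\mathcal{G}_\alpha(N)=\C$ I would simply invoke Proposition~\ref{convergencealgorithm}: every $x\in\C$ admits an $\alpha$-expansion, whose integer part is some $N\in\Lambda_\alpha$, so $x\in\mathcal{G}_\alpha(N)$.

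The principal remaining point is that distinct tiles meet in a set of measure zero and that each boundary is Lebesgue-null, and this is the genuine obstacle: the GIFS above has \emph{infinitely} many states (one per lattice point), so finite self-affine tile theory does not apply directly. The resolution, following~\cite{MR3391902}, is to recognize $\mathcal{G}_\alpha(N)$ as the intersective tiles of the rational self-affine tile that lives in the representation space $\C\times K_{den(\alpha)}$, the product of the Euclidean plane with the $p$-adic completions at the Gaussian primes dividing $den(\alpha)$. There the denominator-adic coordinate of $N$ supplies the missing finite-type data, the infinitely many lattice translates assemble into a single self-affine set, and~\cite[Theorem 3]{MR3391902} asserts that this set tiles the whole representation space by $\Z[\alpha]$-translations with measure-zero boundary. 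Intersecting with the Euclidean fiber then transfers both the essential disjointness and the null-boundary property down to the family $\{\mathcal{G}_\alpha(N)\}$ on $\C$; this is precisely the machinery made explicit in Section~\ref{p adic completions}, where Theorem~\ref{padiccompletions} supplies the $p$-adic characterization. I expect the step requiring the most care to be the identification of $\mathcal{G}_\alpha(N)$ with the intersective tiles, namely matching our tree-based admissibility condition with their $p$-adic convergence condition, whereas the descent of the tiling property from the product space to $\C$ is comparatively routine once the measure-zero boundary is in hand.
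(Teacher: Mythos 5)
Your proposal is correct and follows essentially the same route as the paper: the essential disjointness and null boundaries are obtained by identifying $\mathcal{G}_\alpha(N)$ with the intersective tiles of \cite[Theorem 3]{MR3391902}, with the match between the tree-based and $p$-adic definitions supplied by Theorem~\ref{padiccompletions}. The paper's proof consists of exactly this citation; your additional material (the graph-directed set equation, compactness, and the covering via Proposition~\ref{convergencealgorithm}) is sound and merely makes explicit what the paper leaves implicit.
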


    \begin{proof}
    This result can be found in \cite[Theorem 3]{MR3391902}. The authors define $\mathcal{G}_\alpha(N)$ in a different and more general way than us, by considering self-affine tiles in certain spaces defined in terms of $\mathfrak{p}$-adic completions of $\Q(\alpha)$ with respect to prime ideals. The equivalence between the two definitions is a consequence of Theorem~\ref{padiccompletions} from the next section.
    \end{proof}

    The intersection of two distinct sets in $\mathcal{C}$ occurs on their boundary because they are compact. We have arrived at our main result.
    
    \begin{theorem}\label{theorem:uniqueae}
        The $\alpha$-expansion of a complex number is unique almost everywhere.
    \end{theorem}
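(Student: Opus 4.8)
The plan is to derive the statement directly from the tiling property established in Proposition~\ref{proposition:tiling}, by exhibiting an explicit null set outside of which the expansion is forced to be unique. Write $B := \bigcup_{N \ne N'} \big(\mathcal{G}_\alpha(N) \cap \mathcal{G}_\alpha(N')\big)$ for the set of complex numbers lying in more than one tile. Since $\Lambda_\alpha$ is countable and the collection $\{\mathcal{G}_\alpha(N)\}_{N \in \Lambda_\alpha}$ is pairwise essentially disjoint, $B$ is a countable union of Lebesgue-null sets and hence has measure zero. Because each $\alpha^m$ ($m \ge 0$) acts on $\C \cong \R^2$ as an invertible $\R$-linear map, every preimage $\alpha^{-m} B$ is again null, so the set $\mathcal{N} := \bigcup_{m \ge 0} \alpha^{-m} B$ is null. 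I claim that every $x \notin \mathcal{N}$ has a unique $\alpha$-expansion; existence for all $x$ is already guaranteed by Proposition~\ref{convergencealgorithm}.

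The mechanism linking expansions to the tiling is that multiplication by $\alpha$ acts as a shift of the radix point: if $x = (d_k \ldots d_0 . d_{-1} d_{-2} \ldots)_\alpha$ is an $\alpha$-expansion, then $\alpha^m x = (d_k \ldots d_{-m} . d_{-m-1}\ldots)_\alpha$ is an $\alpha$-expansion whose integer part is $\pi_\alpha(d_k \ldots d_{-m}) \in \Lambda_\alpha$. By the definition of $\mathcal{G}_\alpha$, this says precisely that $\alpha^m x$ lies in the tile indexed by that integer part. Thus the digit string of an $\alpha$-expansion of $x$ is recorded, level by level, by the sequence of tiles containing $x, \alpha x, \alpha^2 x, \ldots$.

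With this in hand the key step is a short case analysis on two distinct $\alpha$-expansions of the same $x$. If their integer parts $N, N'$ differ, then $x \in \mathcal{G}_\alpha(N) \cap \mathcal{G}_\alpha(N')$ with $N \ne N'$, so $x \in B \subseteq \mathcal{N}$. Otherwise the integer parts coincide; since $\alpha$ has the finiteness property the integer $\alpha$-expansion of a lattice point is unique, so the two strings must first disagree at some fractional position, say at the coefficient of $\alpha^{-m}$ with $m \ge 1$, agreeing on all earlier digits. The common prefix through $\alpha^{-(m-1)}$ determines a single lattice point $M = \pi_\alpha(d_k \ldots d_{-(m-1)}) \in \Lambda_\alpha$, and the two expansions exhibit $\alpha^m x$ as a point of $\mathcal{G}_\alpha(\alpha M + d_{-m})$ and of $\mathcal{G}_\alpha(\alpha M + d'_{-m})$ with $d_{-m} \ne d'_{-m}$; these two indices are distinct elements of $\Lambda_\alpha$, so $\alpha^m x \in B$ and therefore $x \in \alpha^{-m} B \subseteq \mathcal{N}$. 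In either case non-uniqueness forces $x \in \mathcal{N}$, so uniqueness holds for every $x \notin \mathcal{N}$, i.e.\ almost everywhere.

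I expect the only genuinely delicate points to be bookkeeping rather than substance: one must check that the shifted digit strings really are \emph{valid} $\alpha$-expansions (their prefixes are integer $\alpha$-expansions, hence correspond to genuine tiles, and the leading digit $d_k$ remains nonzero), and that the two competing indices $\alpha M + d_{-m}$ and $\alpha M + d'_{-m}$ are honestly distinct lattice points rather than coinciding through some relation in $\Lambda_\alpha$ — which holds because $d_{-m}, d'_{-m} \in \D \subseteq \{0, \ldots, |a_0|-1\}$ differ by a nonzero amount. All the measure-theoretic content is imported wholesale from Proposition~\ref{proposition:tiling}; the remainder is the observation that a single exceptional null set $\mathcal{N}$, built from the countably many affine preimages of the tile overlaps, simultaneously rules out both sources of ambiguity.
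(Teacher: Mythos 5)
Your argument is correct and is essentially the paper's own proof: both reduce non-uniqueness to membership in the countable union of the sets $\alpha^{-m}\bigl(\mathcal{G}_\alpha(N)\cap\mathcal{G}_\alpha(N')\bigr)$, which are null by Proposition~\ref{proposition:tiling}. The only cosmetic difference is that you split into the cases ``integer parts differ'' and ``first disagreement is fractional,'' whereas the paper handles both at once by locating the first index $m$ with $d_m\neq d_m'$ and passing to $\alpha^{-m}x$.
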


    \begin{proof}
        Let $x \in \C$ and suppose it has two different $\alpha$-expansions
\[x=\sum_{j \leq k} \alpha^j d_j = \sum_{j \leq k} \alpha^j d'_j,
\]
where $d_k\neq0$ and where we pad the second expansion with zeros if necessary. Let $m\leqslant k$ be the largest integer such that $d_{m}\neq d'_{m}$. Let $N:=(d_{k}\ldots d_{m+1} d_{m})_{\alpha}$ and $N':=(d'_{k}\ldots d'_{m+1}d'_m)_{\alpha}$. By definition of $\alpha$-expansion, we have $N, N' \in \Lambda_\alpha$ and they are distinct. It holds that $\alpha^{-m}x \in \mathcal{G}_\alpha(N) \cap \mathcal{G}_\alpha(N').$
As tiles only overlap on their boundaries, it holds in particular that $x \in \alpha^m \partial \mathcal{G}_\alpha(N) $. 
Therefore, a point $x \in \C$ has two different expansions if and only if $x \in \bigcup_{N \in \Lambda_\alpha}\bigcup_{m\in\Z} \alpha ^{m}\partial\mathcal{G}_\alpha(N) $.
Then $x$ is in a countable union of measure-zero sets.
    \end{proof}

\section{$\alpha$-expansions and $p$-adic completions}\label{p adic completions}
We now characterize $\alpha$-expansions in terms of $p$-adic completions of $\Q(i)$, where $p$ is a Gaussian prime. The ring of integers of the field of Gaussian rationals $\Q(i)$ is the ring of Gaussian integers $\Z[i].$ There are exactly four units of $\Z[i]$, namely $\{ \pm 1, \pm i \}.$

\begin{definition}

  A Gaussian integer $p \in \Z[i]$ is a \textbf{Gaussian prime} if $p \neq 0$, $p$ is not a unit and whenever $p$ divides a product $ab$ with $a,b \in \Z[i]$ then $p$ divides $a$ or $p$ divides $b$.
     Given a Gaussian prime $p$ and a unit $u$, the number $up$ is a Gaussian prime \textit{associated} to $p$. 
      
\end{definition}

Every Gaussian integer, and therefore every Gaussian rational, has a unique factorization into Gaussian primes. This is a classical result.


 \begin{proposition}\label{primedecomp}
Given a non-zero $x \in \Q(i)$, it can be expressed as a product
\begin{equation}
x = u \, p_1^{e_1} \cdots p_k^{e_k}
\end{equation}

where $u \in \{\pm 1, \pm i \}$, $e_1,\ldots, e_k \in \Z$, and $p_1, \ldots, p_k$ are pairwise not associated. This factorization is unique up to associated primes and multiplication by units. Note that the exponents $e_j$ might be negative because $\Q(i)$ is the field of fractions of $\Z[i]$. 
\end{proposition}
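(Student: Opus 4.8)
The plan is to reduce the statement to the unique factorization of the ring of Gaussian integers $\Z[i]$, which is the classical fact recalled just above, and then to extend it to the field of fractions $\Q(i)$. Recall that $\Z[i]$ is a Euclidean domain with respect to the norm $N(a+bi)=a^2+b^2$, hence a unique factorization domain; this is the single input fact I would take for granted, since everything else is a formal consequence.

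For existence, I would write a non-zero $x \in \Q(i)$ as $x = a/b$ with $a,b \in \Z[i]$ and $b \neq 0$. Factoring $a$ and $b$ in $\Z[i]$ as products of Gaussian primes times units, and then cancelling the primes common to numerator and denominator, produces an expression $x = u\,p_1^{e_1}\cdots p_k^{e_k}$ with pairwise non-associated $p_j$ and with exponents $e_j \in \Z$ that may be negative precisely because of the cancellation of denominator primes. Only finitely many primes occur, since $a$ and $b$ each have finitely many prime factors.

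For uniqueness, the cleanest route is to introduce, for each associate class of Gaussian primes, a valuation. For $a \in \Z[i]\setminus\{0\}$ let $\nu_p(a)$ be the exponent of $p$ in its factorization, and extend it to $\Q(i)^*$ by $\nu_p(a/b) = \nu_p(a) - \nu_p(b)$. I would first check that $\nu_p$ is well defined (independent of the representation $a/b$) and is a group homomorphism $\Q(i)^* \to \Z$; both follow from the multiplicativity of factorization in the UFD $\Z[i]$. Given any factorization $x = u\prod_j p_j^{e_j}$ as in the statement, applying $\nu_{p_i}$ to both sides and using $\nu_{p_i}(p_j)=\delta_{ij}$ (valid because the $p_j$ are pairwise non-associated and units have valuation $0$) yields $e_i = \nu_{p_i}(x)$. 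Thus the primes occurring and their exponents are forced by the valuations of $x$, and the leading unit is then determined as $x/\prod_j p_j^{e_j}$; this is exactly uniqueness up to associated primes and multiplication by units.

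The main obstacle is bookkeeping rather than depth: one must verify that associated primes carry the same valuation, so that the statement's ``up to associated primes'' is consistent with the valuation-based argument, and that the passage to the field of fractions correctly accounts for the negative exponents. Once the valuations $\nu_p$ are set up, both existence and uniqueness follow mechanically, so the genuine mathematical content lies entirely in the unique factorization of $\Z[i]$, which I would simply cite as classical.
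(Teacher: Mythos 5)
Your proposal is correct: reducing to the fact that $\Z[i]$ is a Euclidean domain (hence a UFD), extending to the fraction field by cancelling common prime factors, and pinning down the exponents via the valuations $\nu_p$ is the standard and complete argument. The paper itself offers no proof of this proposition, merely remarking beforehand that it is a classical result, so your write-up supplies exactly the routine justification the author chose to omit.
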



The following generalizes the notion of $p$-adic valuation in $\Q$.

\begin{definition}
    Let $p$ be a Gaussian prime. We define the \textbf{$p$-adic valuation} as 
    $$\nu_p : \Q(i) \rightarrow \Z, \qquad x \mapsto \nu_p(x),$$
    where $\nu_p(x)$ is given by the exponent of $p$ in the decomposition of $x$ into Gaussian primes given in Proposition~\ref{primedecomp} whenever $x \neq 0$, and $\nu_p(0) = \infty$. We define the \textbf{$p$-adic absolute value}
\[ |\cdot|_p : \Q(i) \rightarrow \R, \qquad x \mapsto N(p)^{-\nu_p(x)}, \]
where $N(p)$ is the complex norm of $p$, with the convention that $|0|_p = N(p)^{-\infty} = 0.$ 
\end{definition}

Given a Gaussian rational $\alpha$, find two Gaussian integers $num(\alpha)$ and $den(\alpha)$ that are coprime (that is, they have no common Gaussian prime divisors), such that $$\alpha = \frac{num(\alpha)}{den(\alpha)}.$$ It follows from~\cite[Lemma 3.3]{MR3391902} that $N(num(\alpha))=|a_0|$ and $N(den(\alpha))=a_2$.

The proof of the next result follows the proof of~\cite[Lemma 6.2]{MR3391902}.

\begin{theorem}\label{padiccompletions}
      An expansion
        \begin{equation}\label{alphaexpansion}
             x = \sum_{j \leq k} d_j \alpha^j \qquad (d_j\in\mathcal{D})
        \end{equation}
    is an $\alpha$-expansion of $x$ if and only if it converges to $0$ in $K_p$ for every Gaussian prime $p$ dividing $den(\alpha)$.
\end{theorem}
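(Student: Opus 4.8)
The plan is to mirror the structure of the earlier rational-base theorem, replacing the single prime $p \mid b$ by the finite set of Gaussian primes dividing $den(\alpha)$ and the valuation $\nu_p$ on $\Q$ by the Gaussian-prime valuation defined in the previous section. Recall that an expansion \eqref{alphaexpansion} is an $\alpha$-expansion precisely when every prefix $(d_k \ldots d_l)_\alpha$ is an integer $\alpha$-expansion of some $N_l \in \Lambda_\alpha$. So the heart of the matter is to characterize membership in the lattice $\Lambda_\alpha = a_2\Z + (a_2\alpha + a_1)\Z$ by a $p$-adic condition. First I would write the partial sum $S_l = \sum_{l \le j \le k} d_j \alpha^j$ and set $N_l = \alpha^{-l} S_l$, so that being an $\alpha$-expansion amounts to $N_l \in \Lambda_\alpha$ for all $l \le k$.

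The key arithmetic input is the factorization $\alpha = num(\alpha)/den(\alpha)$ with $num(\alpha), den(\alpha)$ coprime Gaussian integers, together with the cited fact $N(num(\alpha)) = |a_0|$ and $N(den(\alpha)) = a_2$. The plan is to show that $\Lambda_\alpha$ is exactly the set of $y \in \Q(i)$ such that $\nu_p(y) \ge 0$ for every Gaussian prime $p \mid den(\alpha)$; equivalently, that scaling by $\alpha^{-l}$ lowers the $p$-adic valuation by $l$ at each prime dividing $den(\alpha)$, since $\nu_p(\alpha) = -\nu_p(den(\alpha)) < 0$ there and $num(\alpha)$ contributes nothing at those primes by coprimality. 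Concretely, for the forward direction I would assume \eqref{alphaexpansion} is an $\alpha$-expansion, fix a Gaussian prime $p \mid den(\alpha)$, and bound $\nu_p(S_l)$ from below using $N_l \in \Lambda_\alpha$; then the tail $\sum_{j \le l-1} d_j \alpha^j$ has all its terms of $p$-adic valuation at least as large as that of $S_l$, so the full $p$-adic limit $L_p$ satisfies a uniform lower valuation bound that pushes $\nu_p(L_p) \to \infty$ as $l \to -\infty$, giving $L_p = 0$ in $K_p$.

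For the converse I would argue contrapositively, exactly as in the rational case: if some $N_l \notin \Lambda_\alpha$, then its failure to lie in the lattice is detected by some Gaussian prime $p \mid den(\alpha)$ at which $\nu_p(N_l)$ is too small, forcing $\nu_p(S_l)$ below the valuation of every tail term $d_t \alpha^t$ for $t \le l-1$, whence $\nu_p(L_p)$ equals $\nu_p(S_l)$ and is finite, so $L_p \ne 0$ and the series does not converge to $0$ in $K_p$. The comparison of valuations for the tail relies on the digits $d_j$ being rational integers in $\D$, so $\nu_p(d_j) \ge 0$ (or $= \infty$), and on $\nu_p(\alpha^t)$ decreasing as $t$ decreases.

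The main obstacle, and the step deserving the most care, is establishing the precise valuation characterization of the lattice $\Lambda_\alpha$: namely that $N_l \in \Lambda_\alpha$ if and only if $\nu_p(N_l) \ge 0$ for all Gaussian primes $p \mid den(\alpha)$. In $\Q$ this is immediate because $b\Z$ is visibly $\{n : \nu_p(n) \ge 0 \text{ for } p \mid b\}$ intersected with $\Z$, but here $\Lambda_\alpha$ is a $\Z$-lattice in $\Q(i)$ with the explicit Brunotte basis rather than an ideal, so I must verify that the denominators introduced by $\alpha^{-l}$ are controlled solely by the primes dividing $den(\alpha)$, that $num(\alpha)$ (whose primes divide $a_0$) never obstructs lattice membership, and that the norm identities $N(num(\alpha)) = |a_0|$, $N(den(\alpha)) = a_2$ are used only to confirm which primes are relevant. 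Once this lattice-valuation dictionary is in place, the two directions follow the template of the earlier theorem almost verbatim, with $\Q_p$ replaced by $K_p$ and the product over $p \mid den(\alpha)$ replacing the single prime.
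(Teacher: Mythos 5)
Your overall architecture is the same as the paper's: reduce to the lattice condition $N_l=\alpha^{-l}S_l\in\Lambda_\alpha$, prove the forward direction by a lower valuation bound on $S_l$ that tends to infinity, and prove the converse by an ultrametric argument showing that a single failing prefix pins $\nu_p(L_p)$ at a finite value. However, the ``lattice--valuation dictionary'' you isolate as the key step is stated incorrectly, and the error is exactly where the converse lives. The set $\{y\in\Q(i):\nu_p(y)\geq 0 \text{ for all } p\mid den(\alpha)\}$ is a ring containing $1$ (and, say, $\tfrac13$), so it cannot equal the lattice $\Lambda_\alpha=a_2\Z+(a_2\alpha+a_1)\Z$; and even your refined version ``for $y\in\Z[\alpha]$: $y\in\Lambda_\alpha$ iff $\nu_p(y)\geq 0$ for all $p\mid den(\alpha)$'' is false. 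Concretely, for $\alpha=\tfrac{-1+3i}{2}$ one has $den(\alpha)=1+i$, $\Lambda_\alpha=2\Z+(1+3i)\Z$, and $N=1\in\Z[\alpha]$ satisfies $\nu_{1+i}(1)=0$ yet $1\notin\Lambda_\alpha$ (this is why $1$ is not a legal leading digit in Figure~\ref{exampletree}). In your contrapositive, such an $N_l$ produces no prime at which the valuation is ``too small,'' so the argument cannot conclude $L_p\neq 0$: the converse fails as written. The correct threshold is $\nu_p(y)\geq\nu_p(den(\alpha))$, i.e.\ $\Lambda_\alpha=\{y\in\Z[\alpha]:\nu_p(y)\geq\nu_p(den(\alpha))\ \forall\, p\mid den(\alpha)\}$, which comes from $\nu_p(\alpha^{-1})=\nu_p(den(\alpha))>0$ together with $\nu_p\geq 0$ on $\Z[\alpha^{-1}]$. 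With that threshold your forward direction is unchanged (it only needs any uniform lower bound on $\nu_p(N_l)$, which holds since $\Lambda_\alpha\subset\Z[i]$), and your converse goes through: $\nu_p(S_l)<(1-l)\nu_p(den(\alpha))$ while every tail term $d_t\alpha^t$, $t\leq l-1$, has $\nu_p\geq(1-l)\nu_p(den(\alpha))$.

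Be aware also that the inclusion you would still owe --- that $y\in\Z[\alpha]$ with $\nu_p(y)\geq\nu_p(den(\alpha))$ for all $p\mid den(\alpha)$ forces $y\in\alpha^{-1}\Z[\alpha^{-1}]$, equivalently that failure of lattice membership is always witnessed by some $p\mid den(\alpha)$ --- is precisely the nontrivial local-to-global input that the paper outsources to \cite[Lemmas 3.3 and 6.2]{MR3391902}; the paper's own proof sidesteps an explicit valuation formula for $\Lambda_\alpha$ by working with the containments $S_l\in\alpha^{l-1}\Z[\alpha^{-1}]$ and $d_j\alpha^j\in\alpha^{l-1}\Z[\alpha^{-1}]$ for $j\leq l-1$ directly. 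You correctly identified this as the delicate step, but you must both fix the threshold and either prove or cite that inclusion for the proof to close.
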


    \begin{proof}
        Given $l \leq k$, consider the sum of the first $k-l$ terms of the series \eqref{alphaexpansion},
        \[
           S_l := d_k\alpha^{k} + \cdots + d_{l+1}\alpha^{l+1} + d_{l}\alpha^{l}.
        \]
    
        Let $N_l =\alpha^{-l} S_l$. By definition, the series is an $\alpha$-expansion if and only if $N_l \in \Lambda_\alpha$ for all $l \leq k$.        
        
         Assume that \eqref{alphaexpansion} is an $\alpha$-expansion of $x \in \C$. Then, by definition of $\Lambda_\alpha$, $N_l \in \Z[\alpha] \cap \alpha^{-1}\Z[\alpha^{-1}]$ and hence $S_l \in \alpha^{l-1} \Z[\alpha^{-1}]$. Let $p$ be a Gaussian prime dividing $den(\alpha)$, then $p^{-l+1}$ is a factor of the numerator of $\alpha^{l-1}$, and $p$ does not appear in the denominator of any point of $\Z[\alpha^{-1}]$, therefore $\nu_p(S_l) \geq -l + 1$. Therefore, $\lim_{l \to -\infty} \nu_p(S_l) = \infty$ and consequently $| \sum_{j \leq k} d_j \alpha^j|_p = 0$.

        For the converse, suppose that \eqref{alphaexpansion} is not an $\alpha$-expansion, that is, there exists some $l \leq k$ such that $N_l \notin \Lambda_\alpha$. Since clearly $N_l \in \Z[\alpha]$, it holds that $N_l \notin \alpha^{-1} \Z[\alpha^{-1}]$, and hence $S_l \notin \alpha^{l-1}\Z[\alpha^{-1}]$. Note that $d_j \alpha^j\in \alpha^{l-1} \Z[\alpha^{-1}]$ for every $j \leq l-1$. 

        Let $\varphi_p: \Q(i) \rightarrow K_p$ be the canonical embedding of $\Q(i)$ into its completion $K_p$. Then
        \[
            \sum_{j \leq k} \varphi_p(d_j \alpha^j ) \notin \varphi_p(\alpha^{l-1} \Z[\alpha^{-1}])
        \]
        and in particular
        \[
            \sum_{j \leq k} \varphi_p(d_j \alpha^j ) \neq 0.
        \]
    \end{proof}

We illustrate this result with our recurring example.

\begin{example}
     Let $\alpha = \frac{-1+3i}{2}$, root of $P_\alpha(X) = 2X^2 + 2X + 5$, and $\D = \{0,1,2,3,4\}$. We can express $\alpha$ as a quotient of coprime Gaussian integers as
        \[  \alpha = \frac{-2+i}{1+i} .\]
       
    Consider the Gaussian prime $p = 1+i$. We can define the $(1+i)$-adic valuation $\nu_{1+i}: \Q(i) \rightarrow \Z$. For example, $2$ factors as $2 = -i (1+i)^2,$ so $\nu_{1+i}(2) =2$ and hence $|2|_{1+i} = \frac14$. Let $K_{1+i}$ be the completion of $\Q(i)$ with respect to this absolute value. We can express an $\alpha$-expansion as
    \begin{equation}\label{fractionalseries}
        \sum_{j \leq k } d_j \alpha^{j}  =  \sum_{j = -k}^{\infty} \frac{d_{-j}}{(-2+i)^j} (1+i)^{j}.
    \end{equation}

    Following the proof of Theorem~\ref{padiccompletions}, the series~\eqref{fractionalseries} has to converge to $0$ in $K_{1+i}$; hence, we require that $1+i$ divides $d_k$ and therefore $d_k \in \{ 0,2,4\}.$   
   
\end{example}

     We now explain how the colors from Figure~\ref{fig:label tiles lattice} were chosen. We can choose $\{0,1\}$ as a set of residues for $\Z[i]/(1+i)\Z[i]$ and regard the elements $y \in K_{1+i}$ as series
    \begin{equation}\label{eq:gaussianpadic}
        y = \sum_{j=\nu_{1+i}(x)}^\infty (1+i)^j c_j, \qquad c_j \in \{0,1\},
    \end{equation}
    where $\nu_{1+i}$ extends the valuation to all of $K_{1+i}$ and satisfies $c_{\nu_{1+i}(y)} \neq 0$. Each $N \in \Lambda_\alpha$ can be expressed as $N = \sum_{j=1}^{\infty} (1+i)^j c_j$ because points in $\Lambda_\alpha$ are divisible by $1+i$. In Figure~\ref{fig:label tiles lattice}, we paint each tile $\mathcal{G}_\alpha (N)$ red whenever $c_1=0, c_2 = 0$, blue whenever $c_1=0, c_2 = 1$, yellow whenever $c_1=1,c_2 = 0$ and green whenever $c_1=1,c_2 = 1$. The next observation is that tiles of the same color seem to have a similar shape, that is, $\mathcal{G}_\alpha (N)$ and $\mathcal{G}_\alpha (N')$ have a more similar shape whenever $N$ and $N'$ are close in the $(1+i)$-adic distance. Note that $x \in \mathcal{G}_\alpha (N)$ if and only if $x = N + \sum_{j \leq -1} \alpha^j d_j$ where $\sum_{j \leq -1} \alpha^j d_j$ converges to $-N$ in $K_{1+i}$. One could therefore consider all series $\sum_{j \leq -1} \alpha^j d_j$ and take the limit both in $\C$ and in $K_{1+i}$, producing a set in $\C \times K_{1+i}$, and then $\mathcal{G}_\alpha (N)$ and $\mathcal{G}_\alpha (N')$ could be seen as `slices' of this set, and therefore they look more alike when $N$ and $N'$ are closer in $K_{1+i}$. This is depicted in Figure~\ref{24_slices_rendered_1}. This is the original formulation, which we formalize in the next section.

  \begin{figure}
      \centering
      \includegraphics[width=0.5\linewidth]{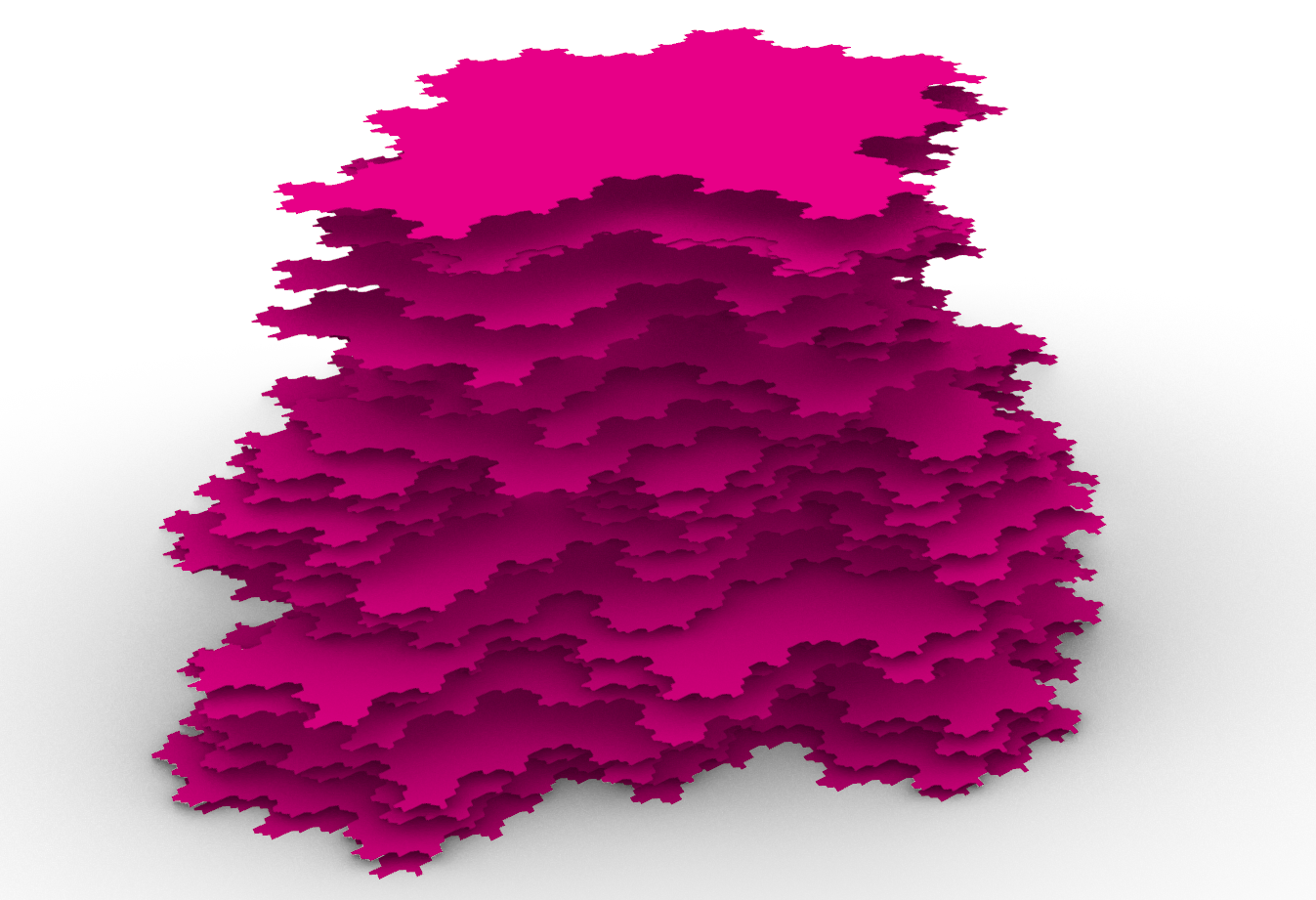}
      \caption{The set of series  $\sum_{j \leq -1} \alpha^j d_j$ embedded in $\C \times K_{1+i}$.}
      \label{24_slices_rendered_1}
  \end{figure}

\subsection{Ambinumbers}\label{Ambinumbers}

Given $x \in \C$, there are in general many ways to expand it in base $\alpha$ and digits $\D = \{0,\ldots,|a_0|-1\}$. We chose a particular kind of expansion and showed that it converges to $0$ in $K_p$ for all Gaussian primes $p$ dividing $den(\alpha)$. It seems, therefore, reasonable to look for an expansion that converges to $x$ in $\C$ and to some fixed value $y_p$ in each $K_p$. This is the setting introduced in~\citep{MR3391902}.

The article \citep{RT:22} considers a number system with base $-\frac32$ and digits $\D = \{0,1,2\}$ using the representation space $\R \times \Q_2$, and it is shown that each point in this space has an expansion in this number system that is unique almost everywhere with respect to a natural Haar measure; Donald Knuth called the elements of this space \textit{ambinumbers} in~\citep{Knuth22}. We retake this name here and generalize this notion for algebraic number systems.

\begin{definition}

Let $\alpha = \frac{num(\alpha)}{den(\alpha)}$. Define
\[
        K_{den(\alpha)} := \prod_{p |den(\alpha)} K_p
\]
where $p$ ranges over Gaussian primes. We call a pair $(x,y) \in \C \times  K_{den(\alpha)}$ an ambinumber. 
\end{definition}

 This motivates the following definition.

 \begin{definition}
    Consider a base $\alpha$ and digits $\D = \{0,\ldots,|a_0|-1\}$. Given an ambinumber $(x,y) \in \C \times K_{den(\alpha)}$, where $y = (y_p)_{p |den(\alpha)}$, the series
        \begin{equation}\label{ambinumberexpansion}
            \sum_{j \leq k} d_j \alpha^j \qquad (d_j\in\mathcal{D})
        \end{equation}
    $d_k \neq 0$ whenever $k\geq1$, is called an $\alpha$-expansion of $(x,y)$ if the series converges to $x$ in $\C$ and to $y_p$ in $K_p$ for every $p |den(\alpha)$. We note it
    \[
        (x,y) = (d_k \ldots d_0 . d_{-1}d_{-2} \ldots)_\alpha
    \]
 \end{definition}

We show next that $\alpha$-expansions of ambinumbers can take any form, and there are no restrictions on the strings of digits allowed. This justifies the introduction of $\C \times K_{den(\alpha)}$ as a natural representation space.

\begin{proposition}
    Let $(\alpha, \D)$ be an algebraic number system. A series of the form \eqref{ambinumberexpansion} 
    correspond to the $\alpha$-expansion of some ambinumber $(x,y) \in \C \times K_{den(\alpha)}$ for any sequence $(d_j)_{j \leq k}$.
\end{proposition}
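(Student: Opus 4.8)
The plan is to show that for \emph{any} prescribed sequence of digits $(d_j)_{j \leq k}$ with $d_j \in \D$, the series $\sum_{j \leq k} d_j \alpha^j$ converges in each of the relevant topologies, so that it automatically defines an ambinumber whose $\alpha$-expansion it is. The key observation is that convergence in $\C$ and convergence in each $K_p$ (for $p \mid den(\alpha)$) are governed by \emph{opposite} tails of the series, so there is no competition between them and no constraint on the digits. First I would split the series as a one-sided integer part $\sum_{0 \leq j \leq k} d_j \alpha^j$ (a finite sum, hence harmless) plus the fractional tail $\sum_{j \leq -1} d_j \alpha^j$, and handle the tail in each completion separately.

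In $\C$, convergence is immediate: since $|\alpha| > 1$ we have $|\alpha^j| = |\alpha|^{-|j|} \to 0$ for $j \to -\infty$, and the digits are bounded by $|a_0| - 1$, so the tail $\sum_{j \leq -1} d_j \alpha^j$ is dominated by the convergent geometric series $(|a_0|-1)\sum_{j \geq 1} |\alpha|^{-j}$. Thus the whole series converges absolutely to some $x \in \C$. For the $p$-adic side, fix a Gaussian prime $p \mid den(\alpha)$. Writing $\alpha = num(\alpha)/den(\alpha)$ with coprime numerator and denominator, $p$ divides $den(\alpha)$ but not $num(\alpha)$, so $\nu_p(\alpha) < 0$ and hence $\nu_p(\alpha^j) = j \, \nu_p(\alpha) \to +\infty$ as $j \to -\infty$. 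Therefore the general term $d_j \alpha^j$ of the fractional tail satisfies $|d_j \alpha^j|_p \to 0$, and because $K_p$ is a non-archimedean complete field the tail converges to some $y_p \in K_p$. Setting $y = (y_p)_{p \mid den(\alpha)}$ yields the ambinumber $(x,y)$, and by construction the series converges to $x$ in $\C$ and to $y_p$ in each $K_p$, which is exactly the definition of an $\alpha$-expansion of $(x,y)$.

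The only point requiring a little care is the convergence of the \emph{integer part} $\sum_{0 \leq j \leq k} d_j \alpha^j$ in the $p$-adic completions, but since this is a finite sum it converges trivially in every $K_p$, so it contributes a harmless finite value to $y_p$. Symmetrically, in $\C$ the integer part is a finite sum and contributes to $x$. Hence the split is completely clean: the finite integer part is unproblematic in both topologies, and the infinite fractional tail converges in $\C$ and in every $K_p$ for the two independent reasons above.

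The step I would flag as the (mild) main obstacle is verifying $\nu_p(\alpha) < 0$ for every $p \mid den(\alpha)$, which is where coprimality of $num(\alpha)$ and $den(\alpha)$ is essential: one must check that such a $p$ does not also divide $num(\alpha)$, so that no cancellation raises the valuation back to zero or above. This follows directly from the defining property that $num(\alpha)$ and $den(\alpha)$ share no common Gaussian prime divisor, together with $p \mid den(\alpha)$ appearing with positive exponent. Everything else is routine estimation with geometric bounds in the archimedean case and with the ultrametric inequality in the non-archimedean case, and the conclusion is that $(d_j)_{j \leq k}$ is unconstrained precisely because the two convergence requirements pull on disjoint ends of the series.
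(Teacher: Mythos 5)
Your proposal is correct and follows essentially the same route as the paper: convergence in $\C$ from $|\alpha|>1$ and the boundedness of $\D$, and convergence in each $K_p$ because $\nu_p(\alpha)<0$ for $p\mid den(\alpha)$ forces the terms $d_j\alpha^j$ to tend to $0$ $p$-adically as $j\to-\infty$, which suffices in a complete non-archimedean field. The paper phrases this via the Cauchy criterion for the partial sums $S_l$ with $\nu_p(S_l-S_{l+1})=\nu_p(d_l)-l\,r_p$, but this is the same computation you make, and your emphasis on coprimality of $num(\alpha)$ and $den(\alpha)$ correctly identifies why $\nu_p(\alpha)=-r_p<0$.
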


\begin{proof}
The series converges in $\C$ because $|\alpha| > 1$ and $\D$ is finite. Let $p$ be a Gaussian prime dividing $den(\alpha)$, and let $r_p$ be the exponent of $p$ in the prime factorization of $den(\alpha)$. Let $l \leq k$ and consider the partial sum $S_l = \sum_{j=l}^{k} d_j \alpha^j$. Then $\nu_p(S_{l}-S_{l+1}) = \nu_p( d_l \alpha^l) = \nu_p(d_l)-l \, r_p $ and so $|S_l - S_{l+1}|_p = N(p)^{r_p l-\nu_p(d_l)}$ with $l \to -\infty$, hence $(S_l)$ is Cauchy in the complete space $K_p$, so the series converges in $K_p$. 
\end{proof}

Note that the $\alpha$-expansion of $x \in \C$ is then the $\alpha$-expansion of the ambinumber $(x,0)$. The integer $\alpha$-expansion of $N \in \Lambda_\alpha$ is the $\alpha$-expansion of the ambinumber $(N,N)$.

We now show a uniqueness result for $\alpha$-expansions of ambinumbers.

\begin{theorem}
    The $\alpha$-expansion of an ambinumber $(x, y) \in \C \times K_{den(\alpha)}$ is unique almost everywhere (with respect to a natural Haar measure).
\end{theorem}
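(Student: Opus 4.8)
The plan is to transfer the uniqueness-almost-everywhere statement for ordinary $\alpha$-expansions (Theorem~\ref{theorem:uniqueae}) into the ambinumber setting by realizing $\C \times K_{den(\alpha)}$ as a quotient of a product space on which $\alpha$-multiplication acts as an expanding automorphism, and then showing that the non-uniqueness locus is a countable union of measure-zero sets. Concretely, I would first fix the natural Haar measure: on $\C \cong \R^2$ the Lebesgue measure, on each $K_p$ the Haar measure of the locally compact additive group (normalized so that the maximal compact subring has measure $1$), and on $\C \times K_{den(\alpha)}$ the product measure. Multiplication by $\alpha$ is a measure-scaling automorphism of this space, expanding by $|\alpha|^2$ on the archimedean factor and contracting by $N(p)^{-\nu_p(den(\alpha))}$ on each $p$-adic factor; since $N(den(\alpha)) = a_2$ and the product formula relates these scalings, $\alpha$ acts with a well-defined Jacobian and in particular preserves measure-zero sets up to the scaling constant.

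Next I would set up the tile for ambinumbers. Following the discussion preceding this subsection, define the \emph{fundamental tile} $\mathcal{F}_\alpha \subset \C \times K_{den(\alpha)}$ as the closure of the set of all pairs obtained by evaluating purely fractional series $\sum_{j \leq -1} d_j \alpha^j$ simultaneously in $\C$ and in each $K_p$; for each $N \in \Lambda_\alpha$ the translated tile $(N,N) + \mathcal{F}_\alpha$ collects exactly the ambinumbers whose $\alpha$-expansion has integer part $N$. The key structural fact I would establish is that these translated tiles, together with their $\alpha^m$-dilates, tile $\C \times K_{den(\alpha)}$: this is precisely the content of \cite[Theorem~3]{MR3391902}, and the slices of $\mathcal{F}_\alpha$ at fixed $p$-adic coordinate recover the sets $\mathcal{G}_\alpha(N)$, so the tiling of Proposition~\ref{proposition:tiling} is the $\C$-slice of the higher-dimensional tiling. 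From the tiling property, the boundary $\partial \mathcal{F}_\alpha$ has measure zero with respect to the Haar measure on the product space.

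The argument then mirrors the proof of Theorem~\ref{theorem:uniqueae} verbatim. Suppose $(x,y)$ has two distinct $\alpha$-expansions $\sum_{j \leq k} d_j \alpha^j$ and $\sum_{j \leq k} d_j' \alpha^j$; let $m$ be the largest index where they differ, and form $N = (d_k \ldots d_m)_\alpha$ and $N' = (d_k' \ldots d_m')_\alpha$, which are distinct elements of $\Lambda_\alpha$. Both expansions, shifted by $\alpha^{-m}$, exhibit $\alpha^{-m}(x,y)$ as lying in both $(N,N)+\mathcal{F}_\alpha$ and $(N',N')+\mathcal{F}_\alpha$; since distinct tiles meet only in their common boundary, $\alpha^{-m}(x,y)$ lies in a boundary set. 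Hence the non-uniqueness locus is contained in $\bigcup_{N \in \Lambda_\alpha} \bigcup_{m \in \Z} \alpha^m\big((N,N) + \partial\mathcal{F}_\alpha\big)$, a countable union of Haar-null sets, and is therefore itself Haar-null.

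The main obstacle I anticipate is not the final measure-theoretic assembly but justifying the tiling property and the vanishing of $\partial\mathcal{F}_\alpha$ in the mixed archimedean/non-archimedean space, and confirming that the Haar measure is the correct self-similar measure under $\alpha$-multiplication. This is exactly where I would lean on \cite[Theorem~3]{MR3391902}, whose self-affine tiles live in precisely this representation space; the bulk of the work is checking that our $\mathcal{F}_\alpha$ coincides with their tile (via Theorem~\ref{padiccompletions}, which identifies $\alpha$-expansions with series converging to $0$ in every $K_p$) so that their tiling and boundary-measure-zero conclusions apply directly. Once that identification is in place, the uniqueness statement follows as above with no further analytic input.
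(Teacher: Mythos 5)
Your proposal follows essentially the same route as the paper: both establish the tiling of $\C \times K_{den(\alpha)}$ by translates of the fundamental tile of fractional expansions via the results of \citep{MR3391902} and then repeat the boundary argument from the proof of Theorem~\ref{theorem:uniqueae} verbatim. The only minor discrepancy is bibliographic: the paper attributes the product-space tiling to Theorem~2 and Corollary~1 of that reference (reserving its Theorem~3 for the archimedean slice used in Proposition~\ref{proposition:tiling}), whereas you cite Theorem~3 for the product-space statement as well.
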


\begin{proof}
    This result is a consequence of \cite[Theorem 2 and Corollary 1]{MR3391902}. The authors consider a set $\F$, which in our setting corresponds to all the ambinumbers with an expansion~\eqref{ambinumberexpansion} with $k \leq -1$, that is, the set of fractional expansions. They show that this set gives a tiling of $\C \times K_{den(\alpha)}$ (in their setting, this is done more generally). Given the existence of a tiling, the uniqueness almost everywhere can be obtained analogously as in the proof of Theorem~\ref{theorem:uniqueae}.
\end{proof}


To finalize, we show how to compute the expansion of an ambinumber $(x,y) \in \C \times K_{den(\alpha)}$ for the case $y \in \Lambda_\alpha$: we regard $\Lambda_\alpha$ as a subset of $K_{den(\alpha)}$ through the product of the canonical embeddings, that is, $y \hookrightarrow (y)_{p |den(\alpha)}$. Note that $\C \times K_{den(\alpha)}$ is a group with component-wise addition, therefore
\[
    (x,y) = (x,0) +(0,y).
\]
We know how to compute the $\alpha$-expansion of $(x,0)$, because it is exactly the $\alpha$-expansion of $x$. We also know how to add $\alpha$-expansions (same algorithm as with integer $\alpha$-expansions, just adding the radix point), so it remains to find the expansion of $(0,y)$. The group $\Lambda_\alpha$ acts on $\C \times K_{den(\alpha)}$ by multiplication, hence, if $y \in \Lambda_\alpha$, it holds that

\[
    (0,y) = y \cdot (0,1)
\]
Computing the $\alpha$-expansion of $(0,1)$ is not hard: we can first compute the $\alpha$-expansion of $-1$, which converges to $-1$ in $\R$ and to $0$ in $K_{den(\alpha)}$. Adding $1$ to the digit left to the radix point of this expansion will converge to $0$ in $\R$ and $1$ in $K_p$, yielding the expansion of $(0,1)$. Since $y \in \Lambda_\alpha$, we can compute its integer $\alpha$-expansion, and then multiply both expansions. 

\begin{example}
    As usual, let $\alpha = \frac{-1+3i}{2}$. Suppose we want to find the expansion of the ambinumber $(\sqrt{2},1+3i) \in \C \times K_{den(\alpha)}.$ From Example~\ref{examplesqrt2},
    \[
        (\sqrt{2}, 0) = (2.234112142444002024120003\ldots)_\alpha
    \]
    We also have
    \[
        -1 = (0.243100111243211314444112303\ldots)_\alpha
    \]
    Consequently, 
    \[
        (0,1) = (1.243100111243211314444112303\ldots)_\alpha
    \]
    Using $1+3i = (22)_\alpha$ and the multiplication algorithm, 
    \[
        (0,1+3i) = (0.12320244240042344032002044\ldots)_\alpha
    \]
    Using the addition algorithm, we get
    \[
        (\sqrt{2},1+3i) = (\sqrt{2}, 0) + (0,1+3i) = (2.13231231234442321444022 \ldots)_\alpha
    \]
    One can check that this expansion converges to the desired point. 
\end{example}

\section{Generalizations and open questions}\label{Open questions}

\subsection{Generalizations}

Throughout the paper, we assumed that $\alpha$ is a Gaussian rational, however, the study of algebraic number systems can also be done when $\alpha$ is an expanding quadratic algebraic number. The definition of the expansions is the same, and the main difference would be encountered when considering $ p$-adic completions. If $\alpha$ is not a Gaussian rational, then in general $\Q(\alpha)$ is not a unique factorization domain. However, it is a Dedekind domain, and the $p$-adic completions can be defined in terms of prime ideals. This is contained in~\citep{MR3391902} and therefore the proofs that rely on results there would stay essentially the same. Moreover, if $\alpha$ is an expanding algebraic number of degree $n$, instead of expansions in $\C$, one can consider expansions in $\R^n$ using Galois embeddings. We opted for $\alpha \in \Q(i)$ for simplicity. 

We considered digit sets of the form $\D = \{ 0, \ldots, |a_0|-1\}$, but it would suffice to consider a complete residue set for $\Z[\alpha] / \alpha \Z[\alpha]$ that is primitive, that is, such that $\Z \langle \alpha, \D \rangle = \Z[\alpha]$, where $\Z \langle \alpha, \D \rangle$ is the smallest $\alpha$-invariant $\Z$-submodule of $\Z[\alpha]$ containing the difference set $\D-\D$. Once again, for simplicity, we opted for this special shape of $\D$.

\subsection{Open questions}

We leave some open questions to motivate future research.

\begin{enumerate}

 \item Is the language of integer $\alpha$-expansions regular? Is it context-free? We conjecture it is not, as this is the case for rational base number systems (see Corollaries 7 and 9 of~\citep{MR2448050}).

 \item Is it possible to characterize, given a base $\alpha$, the set of points $x \in \C$ that have multiple $\alpha$-expansions? What is the upper bound for the number of possible $\alpha$-expansions? What shape do these multiple expansions have? This question is answered for the rational case; however, the algebraic setting is quite different because complex numbers are not ordered.

\item What can we say about the distribution of digits (and, respectively, finite words) in $\alpha$-expansions of complex numbers? We conjecture that, for almost all $x \in \C$, any two words of the same length in $D^*$ appear with the same frequency in the $\alpha$-expansion of $x$. This was proven for rational base number systems in~\citep{MST:13}. Not only is this not obvious, but it isn't easy to prove even in the rational case, and the authors in~\citep{MST:13} use sophisticated techniques such as Fourier analysis in Ad\`ele rings. 
 
\end{enumerate}

\bibliographystyle{elsarticle-harv}  
\bibliography{biblio2}








\end{document}